\newtheorem{theorem}{Theorem}
\newtheorem{definition}[theorem]{Definition}
\newtheorem{lemma}[theorem]{Lemma}
\newtheorem{remark}[theorem]{Remark}
\newenvironment{proof}[1][Proof]{\noindent\textbf{#1.} }{\ \rule{0.5em}{0.5em}}
\begin{document}

\title{A general necessary and sufficient optimality conditions for singular control problems}
\author{Seid Bahlali\thanks{Laboratory of applied mathematics, University Med Khider,
P.O. Box 145, Biskra 07000, Alg\'{e}ria. \ sbahlali@yahoo.fr}}
\date{}
\maketitle

\begin{abstract}
We consider a stochastic control problem where the set of strict (classical)
controls is not necessarily convex and the the variable control has two
components, the first being absolutely continuous and the second singular. The
system is governed by a nonlinear stochastic differential equation, in which
the absolutely continuous component of the control enters both the drift and
the diffusion coefficients. By introducing a new approach, we establish
necessary and sufficient optimality conditions for two models. The first
concerns the relaxed-singular controls, who are a pair of processes whose
first component is a measure-valued processes. The second is a particular case
of the first and relates to strict-singular control problems. These results
are given in the form of global stochastic maximum principle by using only the
first order expansion and the associated adjoint equation. This improves and
generalizes all the previous works on the maximum principle of controlled
stochastic differential equations.

\ 

\ \textbf{Keywords. }Stochastic differential equation, Strict-singular
control, Relaxed-singular control, Maximum principle, Adjoint process,
Variational inequality.

\ 

\textbf{AMS subject classification. }93Exx.

\end{abstract}

\section{Introduction}

\ 

We study a stochastic control problem where the system is governed by a
nonlinear stochastic differential equation (SDE\ for short) of the type
\[
\left\{
\begin{array}
[c]{l}%
dx_{t}^{\left(  v,\eta\right)  }=b\left(  t,x_{t}^{\left(  v,\eta\right)
},v_{t}\right)  dt+\sigma\left(  t,x_{t}^{\left(  v,\eta\right)  }%
,v_{t}\right)  dW_{t}+G_{t}d\eta_{t},\\
x_{0}^{\left(  v,\eta\right)  }=x,
\end{array}
\right.
\]
where $b,\sigma$ and $G$ are given functions, $x$ is the initial data and
$W=\left(  W_{t}\right)  _{t\geq0}$ is a standard Brownian motion, defined on
a filtered probability space $\left(  \Omega,\mathcal{F},\left(
\mathcal{F}_{t}\right)  _{t\geq0},\mathcal{P}\right)  ,$ satisfying the usual conditions.

The control variable, called strict-singular control, is a suitable process
$\left(  v,\eta\right)  $ where $v:\left[  0,T\right]  \times\Omega
\longrightarrow U_{1}\subset\mathbb{R}^{k}$, $\eta:\left[  0,T\right]
\times\Omega\longrightarrow U_{2}=\left(  [0,\infty)\right)  ^{m}$ are
$B\left[  0,T\right]  \otimes\mathcal{F}$-measurable, $\left(  \mathcal{F}%
_{t}\right)  $- adapted, and $\eta$ is an increasing process (componentwise),
continuous on the left with limits on the right with $\eta_{0}=0$. We denote
by $\mathcal{U}$ the class of all strict-singular controls.

The criteria to be minimized, over the set $\mathcal{U}$, has the form%
\[
J\left(  v,\eta\right)  =E\left[  g\left(  x_{T}^{\left(  v,\eta\right)
}\right)  +\int_{0}^{T}h\left(  t,x_{t}^{\left(  v,\eta\right)  }%
,v_{t}\right)  dt+\int_{0}^{T}k_{t}d\eta_{t}\right]  ,
\]
where, $g,\ h$ and $k$ are given maps and $x_{t}^{\left(  v,\eta\right)  }$ is
the trajectory of the system controlled by $\left(  v,\eta\right)  $.

A control $\left(  u,\xi\right)  \in\mathcal{U}$ is called optimal if it
satisfies%
\[
J\left(  u,\xi\right)  =\underset{\left(  v,\eta\right)  \in\mathcal{U}}{\inf
}J\left(  v,\eta\right)  .
\]

This kind of stochastic control problems have been studied by many authors,
both by the dynamic programming approach and by the Pontryagin stochastic
maximum principle. The first approach was studied by Ben\u{e}s, Shepp and
Witsenhausen $\left[  6\right]  ,$ Chow, Menaldi and Robin $\left[  10\right]
,$ Karatzas and Shreve $\left[  21\right]  ,$ Davis and Norman $\left[
11\right]  , $ Haussmann and Suo $\left[  17,18,19\right]  .$ See $\left[
17\right]  $ for a complete list of references on the subject. It was shown in
particular that the value function is solution of a variational inequality,
and the optimal state is a reflected diffusion at the free boundary. Note that
in $\left[  17\right]  ,$ the authors apply the compactification method to
show existence of an optimal singular control.

In this paper, we are concerned with the second approach, whose objective is
to establish necessary (as well as sufficient) conditions for optimality of
controls. The first version of the stochastic maximum principle that covers
singular control problems was obtained by Cadenillas and Haussmann $\left[
8\right]  $, in which they consider linear dynamics, convex cost criterion and
convex state constraints. The method used in $\left[  8\right]  $ is based on
the known principle of convex analysis, related to the minimization of convex,
continuous and G\^{a}teaux-differentiable functional defined on a convex
closed set. Necessary conditions of optimality for non linear SDEs with convex
control domain, where the coefficients depend explicitly on the absolutely
part of the control, was derived by Bahlali and Chala $\left[  1\right]  $ by
applying a convex perturbation on the pair of controls. The result in then
obtained in weak form. Bahlali and Mezerdi $\left[  2\right]  $ generalize the
work of $\left[  1\right]  $ to the case of nonconvex control domain, and
derive necessary optimality conditions by using a strong perturbation (spike
variation) on the absolutely continuous component of the control and a convex
perturbation on the singular one. The Peng stochastic maximum principle is
then used and te result is given with two adjoint equations and a variational
inequality of the second order. Version of stochastic maximum principle for
relaxed-singular controls was established by Bahlali, Djehiche and Mezerdi
$\left[  4\right]  $ in the case of uncontrolled diffusion, by using the
previous works on strict-singular controls, Ekeland's variational principle
and some stability properties of the trajectories and adjoint processes with
respect to the control variable.

In a recent work, Bahlali $\left[  5\right]  $ generalizes and improves all
the previous results on stochastic maximum principle for controlled SDEs, by
introducing a new approach and establish necessary and sufficient optilmality
conditions for both relaxed and strict controls, by using only the first order
expansion and the associated adjoint equation. The main idea of $\left[
5\right]  $, is to use the property of convexity of the set of relaxed
controls and treat the problem with the convex perturbation on relaxed
controls (instead of the spike variation on strict one).

Our aim in this paper, is to follow the method used by $\left[  5\right]  $
and derive necessary as well as sufficient conditions of optimality in the
form of global stochastic maximum principle, for both relaxed-singular and
strict-singular controls, without using the second order expansion. We
introduce then a bigger new class $\mathcal{R}$ of\ processes by replacing the
$U_{1}$-valued process $\left(  v_{t}\right)  $ by a $\mathbb{P}\left(
U_{1}\right)  $-valued process $\left(  q_{t}\right)  $, where $\mathbb{P}%
\left(  U_{1}\right)  $ is the space of probability measures on $U_{1}$
equipped with the topology of stable convergence. This new class of processes
is called relaxed-singular controls and have a richer structure, for which the
control problem becomes solvable.

In the relaxed-singular model, the system is governed by the SDE%
\[
\left\{
\begin{array}
[c]{l}%
dx_{t}^{\left(  q,\eta\right)  }=%
%TCIMACRO{\dint \nolimits_{U_{1}}}%
%BeginExpansion
{\displaystyle\int\nolimits_{U_{1}}}
%EndExpansion
b\left(  t,x_{t}^{\left(  q,\eta\right)  },a\right)  q_{t}\left(  da\right)
dt+%
%TCIMACRO{\dint \nolimits_{U_{1}}}%
%BeginExpansion
{\displaystyle\int\nolimits_{U_{1}}}
%EndExpansion
\sigma\left(  t,x_{t}^{\left(  q,\eta\right)  },a\right)  q_{t}\left(
da\right)  dW_{t}+G_{t}d\eta_{t},\\
x_{0}^{\left(  q,\eta\right)  }=x.
\end{array}
\right.
\]

The functional cost to be minimized, over the class $\mathcal{R}$ of
relaxed-singular controls, is defined by%
\[
\mathcal{J}\left(  q,\eta\right)  =\mathbb{E}\left[  g\left(  x_{T}^{\left(
q,\eta\right)  }\right)  +\int_{0}^{T}%
%TCIMACRO{\dint \nolimits_{U_{1}}}%
%BeginExpansion
{\displaystyle\int\nolimits_{U_{1}}}
%EndExpansion
h\left(  t,x_{t}^{\left(  q,\eta\right)  },a\right)  q_{t}\left(  da\right)
dt+\int_{0}^{T}k_{t}d\eta_{t}\right]  .
\]

A relaxed-singular control $\left(  \mu,\xi\right)  $ is called optimal if it
solves
\[
\mathcal{J}\left(  \mu,\xi\right)  =\inf\limits_{\left(  q,\eta\right)
\in\mathcal{R}}\mathcal{J}\left(  q,\eta\right)  .
\]

The relaxed-singular control problem finds its interest in two essential
points. The first is that an optimal solution exists. Haussmann and Suo
$\left[  17\right]  $ have proved that the relaxed-singular control problem
admits an optimal solution under general conditions on the coefficients.
Indeed, by using a compactification method and under some mild continuity
hypotheses on the data, it is shown by purely probabilistic arguments that an
optimal solution for the problem exists. Moreover, the value function is shown
to be Borel measurable. The second interest is that it is a generalization of
the strict-singular control problem. Indeed, if $q_{t}\left(  da\right)
=\delta_{v_{t}}\left(  da\right)  $ is a Dirac measure concentrated at a
single point $v_{t}$ of $U_{1}$, then we get a strict-singular control problem
as a particular case of the relaxed one.

To achieve the objective of this paper and establish necessary and sufficient
optimality conditions for these two models, we proceed as follows.

Firstly, we give the optimality conditions for relaxed controls. The main idea
is to use the fact that the set of relaxed controls is convex. Then, we
establish necessary optimality conditions by using the classical way of the
convex perturbation method. More precisely, if we denote by $\left(  \mu
,\xi\right)  $ an optimal relaxed control and $\left(  q,\eta\right)  $ is an
arbitrary element of $\mathcal{R}$, then with a sufficiently small $\theta>0 $
and for each $t\in\left[  0,T\right]  $, we can define a perturbed control as
follows%
\[
\left(  \mu_{t}^{\theta},\xi_{t}^{\theta}\right)  =\left(  \mu_{t},\xi
_{t}\right)  +\theta\left[  \left(  q_{t},\eta_{t}\right)  -\left(  \mu
_{t},\xi_{t}\right)  \right]  .
\]

We derive the variational equation from the state equation, and the
variational inequality from the inequality%
\[
0\leq\mathcal{J}\left(  \mu^{\theta},\xi^{\theta}\right)  -\mathcal{J}\left(
\mu,\xi\right)  .
\]

By using the fact that the drift, the diffusion and the running cost
coefficients are linear with respect to the relaxed control variable,
necessary optimality conditions are obtained directly in the global form. The
result is given by using only the first-order expansion and the associated
adjoint equations

To enclose this part of the paper, we prove under minimal additional
hypotheses, that these necessary optimality conditions for relaxed-singular
controls are also sufficient.

The second main result in the paper characterizes the optimality for
strict-singular control processes. It is directly derived from the above
results by restricting from relaxed to strict-singular controls. The main idea
is to replace the relaxed controls by a Dirac measures charging a strict
controls. Thus, we reduce the set $\mathcal{R}$ of relaxed-singular controls
and we minimize the cost $\mathcal{J}$ over the subset $\delta\left(
\mathcal{U}_{1}\right)  \times\mathcal{U}_{2}=\left\{  \left(  q,\eta\right)
\in\mathcal{R}\text{ \ / }\ q=\delta_{v}\ \ ;\ \ v\in\mathcal{U}_{1}\right\}
$. Then, we derive necessary optimality conditions by using only the first
order expansion and the associated adjoint equation. We don't need anymore the
second order expansion. Moreover, we show that these necessary optimality
conditions for strict-singular controls are also sufficient, without imposing
neither the convexity of $U_{1}$ nor that of the Hamiltonian $H$ in $v$.

The results of this paper are an important improvement of those of Bahlali and
Mezerdi $\left[  2\right]  $ and an extension of the works by Bahlali $\left[
5\right]  $ to the class of singular controls.

The paper is organized as follows. In Section 2, we formulate the
strict-singular and relaxed-singular control problems and give the various
assumptions used throughout the paper. Section 3 is devoted to study the
relaxed-singular control problems and we establish necessary as well as
sufficient conditions of optimality for relaxed-singular controls. In the last
section, we derive directly from the results of Section 3, the optimality
conditions for strict-singular controls.

\ 

Along with this paper, we denote by $C$ some positive constant and for
simplicity, we need the following matrix notations. We denote by
$\mathcal{M}_{n\times d}\left(  \mathbb{R}\right)  $ the space of $n\times d$
real matrix and $\mathcal{M}_{n\times n}^{d}\left(  \mathbb{R}\right)  $ the
linear space of vectors $M=\left(  M_{1},...,M_{d}\right)  $ where $M_{i}%
\in\mathcal{M}_{n\times n}\left(  \mathbb{R}\right)  $. For any $M,N\in
\mathcal{M}_{n\times n}^{d}\left(  \mathbb{R}\right)  $, $L,S\in
\mathcal{M}_{n\times d}\left(  \mathbb{R}\right)  $, $Q\in\mathcal{M}_{n\times
n}\left(  \mathbb{R}\right)  $, $\alpha,\beta\in\mathbb{R}^{n}$ and $\gamma
\in\mathbb{R}^{d},$ we use the following notations

$\alpha\beta=%
%TCIMACRO{\dsum \limits_{i=1}^{n}}%
%BeginExpansion
{\displaystyle\sum\limits_{i=1}^{n}}
%EndExpansion
\alpha_{i}\beta_{i}\in\mathbb{R}$ is the product scalar in $\mathbb{R}^{n}$;

$LS=%
%TCIMACRO{\dsum \limits_{i=1}^{d}}%
%BeginExpansion
{\displaystyle\sum\limits_{i=1}^{d}}
%EndExpansion
L_{i}S_{i}\in\mathbb{R}$, where $L_{i}$ and\ $S_{i}$ are the $i^{th}$ columns
of $L$ and $S;$

$ML=%
%TCIMACRO{\dsum \limits_{i=1}^{d}}%
%BeginExpansion
{\displaystyle\sum\limits_{i=1}^{d}}
%EndExpansion
M_{i}L_{i}\in\mathbb{R}^{n}$;

$M\alpha\gamma=\sum\limits_{i=1}^{d}\left(  M_{i}\alpha\right)  \gamma_{i}%
\in\mathbb{R}^{n}$;

$MN=%
%TCIMACRO{\dsum \limits_{i=1}^{d}}%
%BeginExpansion
{\displaystyle\sum\limits_{i=1}^{d}}
%EndExpansion
M_{i}N_{i}\in\mathcal{M}_{n\times n}\left(  \mathbb{R}\right)  $;

$MQN=%
%TCIMACRO{\dsum \limits_{i=1}^{d}}%
%BeginExpansion
{\displaystyle\sum\limits_{i=1}^{d}}
%EndExpansion
M_{i}QN_{i}\in\mathcal{M}_{n\times n}\left(  \mathbb{R}\right)  $;

$MQ\gamma=%
%TCIMACRO{\dsum \limits_{i=1}^{d}}%
%BeginExpansion
{\displaystyle\sum\limits_{i=1}^{d}}
%EndExpansion
M_{i}Q\gamma_{i}\in\mathcal{M}_{n\times n}\left(  \mathbb{R}\right)  $.

We denote by $L^{\ast}$ the transpose of the matrix $L$ and $M^{\ast}=\left(
M_{1}^{\ast},...,M_{d}^{\ast}\right)  $.

\section{Formulation of the problem}

\ 

Let $\left(  \Omega,\mathcal{F},\left(  \mathcal{F}_{t}\right)  _{t\geq
0},\mathcal{P}\right)  $ be a filtered probability space satisfying the usual
conditions, on which a d-dimensional Brownian motion $W=\left(  W_{t}\right)
_{t\geq0}$\ is defined. We assume that $\left(  \mathcal{F}_{t}\right)  $ is
the $\mathcal{P}-$ augmentation of the natural filtration of $\left(
W_{t}\right)  _{t\geq0}.$

Let $T$ be a strictly positive real number and consider the following sets

$U_{1}$ is a non empty subset of $\mathbb{R}^{k}$,

$U_{2}=\left(  \left[  0,\infty\right)  \right)  ^{m},$

$\mathcal{U}_{1}$ is the class of measurable, adapted processes $v:\left[
0,T\right]  \times\Omega\longrightarrow U_{1}$ such that%
\[
E\left[  \underset{t\in\left[  0,T\right]  }{\sup}\left\vert v_{t}\right\vert
^{2}\right]  <\infty.
\]

$\mathcal{U}_{2}$ is the class of measurable, adapted processes $\eta:\left[
0,T\right]  \times\Omega\longrightarrow U_{2}$ such that $\eta$ is
nondecreasing (componentwise), left-continuous with right limits, $\eta_{0}%
=0$, and
\[
E\left[  \left\vert \eta_{T}\right\vert ^{2}\right]  <\infty.
\]

\subsection{The strict-singular control problem}

\ 

\begin{definition}
\textit{A strict-singular control is a pair of processes }$\left(
v,\eta\right)  \in\mathcal{U}_{1}\times\mathcal{U}_{2}$\textit{.}

\ 

\textit{We denote by }$\mathcal{U=U}_{1}\times\mathcal{U}_{2}$\textit{\ the
set of all strict-singular controls.}
\end{definition}

\ 

For any $\left(  v,\eta\right)  \in\mathcal{U}$, we consider the following
SDE
\begin{equation}
\left\{
\begin{array}
[c]{l}%
dx_{t}^{\left(  v,\eta\right)  }=b\left(  t,x_{t}^{\left(  v,\eta\right)
},v_{t}\right)  dt+\sigma\left(  t,x_{t}^{\left(  v,\eta\right)  }%
,v_{t}\right)  dW_{t}+G_{t}d\eta_{t},\\
x_{0}^{\left(  v,\eta\right)  }=x,
\end{array}
\right.
\end{equation}
where%
\[%
\begin{array}
[c]{l}%
b:\left[  0,T\right]  \times\mathbb{R}^{n}\times U_{1}\longrightarrow
\mathbb{R}^{n},\\
\sigma:\left[  0,T\right]  \mathbb{\times R}^{n}\times U_{1}\longrightarrow
\mathcal{M}_{n\times d}\left(  \mathbb{R}\right)  ,\\
G:\left[  0,T\right]  \longrightarrow\mathcal{M}_{n\times m}\left(
\mathbb{R}\right)  .
\end{array}
\]

The criteria to be minimized is defined from $\mathcal{U}$ into $\mathbb{R}$ by%

\begin{equation}
J\left(  v,\eta\right)  =E\left[  g\left(  x_{T}^{\left(  v,\eta\right)
}\right)  +\int_{0}^{T}h\left(  t,x_{t}^{\left(  v,\eta\right)  }%
,v_{t}\right)  dt+\int_{0}^{T}k_{t}d\eta_{t}\right]  ,
\end{equation}
Where
\begin{align*}
g  &  :\mathbb{R}^{d}\longrightarrow\mathbb{R},\\
h  &  :\left[  0,T\right]  \times\mathbb{R}^{d}\times U_{1}\longrightarrow
\mathbb{R},\\
k  &  :\left[  0,T\right]  \longrightarrow\left(  \lbrack0,\infty)\right)
^{d}.
\end{align*}

A strict-singular control $\left(  v,\eta\right)  $ is called optimal if it
satisfies%
\begin{equation}
J\left(  u,\xi\right)  =\inf\limits_{\left(  v,\eta\right)  \in\mathcal{U}%
}J\left(  v,\eta\right)  .
\end{equation}

\ 

We assume that%
\begin{align}
&  b,\sigma,g\text{ and }h\ \text{are continuously differentiable with respect
to }x\text{.}\nonumber\\
&  \text{The derivatives }b_{x},\sigma_{x},g_{x}\text{ and }h_{x},\text{are
continuous in }\left(  x,v\right)  \text{ and}\nonumber\\
&  \text{uniformly bounded.}\\
&  b\text{ and }\sigma\text{ are bounded by }C\left(  1+\left\vert
x\right\vert +\left\vert v\right\vert \right)  .\nonumber\\
&  G\text{ and }k\text{ are continuous and }G\text{ is bounded}.\nonumber
\end{align}

Under the above assumptions, for every $\left(  v,\eta\right)  \in\mathcal{U}
$, equation $\left(  1\right)  $ has an unique strong solution and the
functional cost $J$ is well defined from $\mathcal{U}$ into $\mathbb{R}$.

\subsection{The relaxed-singular model}

\ 

The strict-singular control problem $\left\{  \left(  1\right)  ,\left(
2\right)  ,\left(  3\right)  \right\}  $ formulated in the last subsection may
fail to have an optimal solution. Let us begin by a deterministic examples
which shows that even in simple cases, existence of a strict optimal control
is not ensured (see Fleming $\left[  16\right]  $ and Yong and Zhou $\left[
28\right]  $ for other examples).

\textbf{Example 1. }The problem is to minimize, over the set $U$ of measurable
functions $v:\left[  0,T\right]  \rightarrow\left\{  -1,1\right\}  $, the
following functional cost%
\[
J\left(  v\right)  =%
%TCIMACRO{\dint \nolimits_{0}^{T}}%
%BeginExpansion
{\displaystyle\int\nolimits_{0}^{T}}
%EndExpansion
\left(  x_{t}^{v}\right)  ^{2}dt,
\]
where $x_{t}^{v}$ denotes the solution of%
\[
\left\{
\begin{array}
[c]{c}%
dx_{t}^{v}=v_{t}dt,\\
x_{0}^{v}=0.
\end{array}
\right.
\]

We have
\[
\inf\limits_{v\in\mathcal{U}}J\left(  v\right)  =0.
\]

Indeed, consider the following sequence of controls%
\[
v_{t}^{n}=\left(  -1\right)  ^{k}\text{ \ if \ }%
%TCIMACRO{\QDOVERD{.}{.}{k}{n}}%
%BeginExpansion
\genfrac{.}{.}{}{0}{k}{n}%
%EndExpansion
T\leq t\leq%
%TCIMACRO{\QDOVERD{.}{.}{k+1}{n}}%
%BeginExpansion
\genfrac{.}{.}{}{0}{k+1}{n}%
%EndExpansion
T\ \ ,\ \ 0\leq k\leq n-1.
\]

Then clearly
\begin{align*}
\left\vert x_{t}^{v^{n}}\right\vert  &  \leq%
%TCIMACRO{\QDOVERD{.}{.}{T}{n}}%
%BeginExpansion
\genfrac{.}{.}{}{0}{T}{n}%
%EndExpansion
,\\
\left\vert J\left(  v^{n}\right)  \right\vert  &  \leq%
%TCIMACRO{\QDOVERD{.}{.}{T^{3}}{n^{2}}}%
%BeginExpansion
\genfrac{.}{.}{}{0}{T^{3}}{n^{2}}%
%EndExpansion
.
\end{align*}

Which implies that
\[
\inf\limits_{v\in\mathcal{U}}J\left(  v\right)  =0.
\]

There is however no control $v$ such that $J\left(  v\right)  =0$. If this
would have been the case, then for every $t,\ x_{t}^{v}=0$. This in turn would
imply that $v_{t}=0$, which is impossible. The problem is that the sequence
$\left(  v^{n}\right)  $ has no limit in the space of strict controls. This
limit if it exists, will be the natural candidate for optimality. If we
identify $v_{t}^{n}$ with the Dirac measure $\delta_{v_{t}^{n}}\left(
da\right)  $ and set $q_{n}\left(  dt,dv\right)  =\delta_{v_{t}^{n}}\left(
dv\right)  dt$, we get a measure on $\left[  0,1\right]  \times U$. Then, the
sequence $\left(  q_{n}\left(  dt,dv\right)  \right)  _{n}$ converges weakly
to $%
%TCIMACRO{\QDOVERD{.}{.}{1}{2}}%
%BeginExpansion
\genfrac{.}{.}{}{0}{1}{2}%
%EndExpansion
dt.\left[  \delta_{-1}+\delta_{1}\right]  \left(  da\right)  $.

\textbf{Example 2. }Consider the control problem where the system is governed
by the SDE%
\[
\left\{
\begin{array}
[c]{l}%
dx_{t}=v_{t}dt+dW_{t},\\
x_{0}=0.
\end{array}
\right.
\]

The functional cost to be minimized is given by%
\[
J\left(  v\right)  =\mathbb{E}\int_{0}^{T}\left[  x_{t}^{2}+\left(
1-v_{t}^{2}\right)  ^{2}\right]  dt.
\]

$U=\left[  -1,1\right]  $ and $x,\ v,\ W$ are one dimensional. The control $v$
(open loop) is a measurable function from $\left[  0,T\right]  $ into $U$.

The separation principle applies to this example, the optimal control
minimizes%
\[
\int_{0}^{T}\left[  \widehat{x}_{t}^{2}+\left(  1-v_{t}^{2}\right)
^{2}\right]  ,
\]
where $\widehat{x}_{t}=\mathbb{E}\left[  x_{t}\right]  $ satisfies%
\[
\left\{
\begin{array}
[c]{l}%
d\widehat{x}_{t}=v_{t}dt,\\
\widehat{x}_{0}=0.
\end{array}
\right.
\]

This problem has no optimal strict control. A relaxed solution is to let
\[
\mu_{t}=%
%TCIMACRO{\QOVERD{.}{.}{1}{2}}%
%BeginExpansion
\genfrac{.}{.}{}{}{1}{2}%
%EndExpansion
\delta_{1}+%
%TCIMACRO{\QOVERD{.}{.}{1}{2}}%
%BeginExpansion
\genfrac{.}{.}{}{}{1}{2}%
%EndExpansion
\delta_{-1},
\]
where $\delta_{a}$ is an Dirac measure concentrated at a single point $a.$

\ 

This suggests that the set\ of strict controls is too narrow and should be
embedded into a wider class with a richer topological structure for which the
control problem becomes solvable. The idea of relaxed-singular control is to
replace the absolutely continuous part $v_{t}$\ of the strict-singular control
by a $\mathbb{P}\left(  U_{1}\right)  $-valued process $\left(  q_{t}\right)
$, where $\mathbb{P}\left(  U_{1}\right)  $\ is the space of probability
measures on $U_{1}$ equipped with the topology of stable convergence of measures.

\begin{definition}
A relaxed-singular control is a pair $\left(  q,\eta\right)  $ of processes
such that

i) $q$ is a $\mathbb{P}\left(  U_{1}\right)  $-valued process progressively
measurable with respect to $\left(  \mathcal{F}_{t}\right)  $ and such that
for each $t$, $1_{]0,t]}.q$ is $F_{t}$-measurable.

ii) $\eta\in\mathcal{U}_{2}$.

\ 

We denote by $\mathcal{R}=\mathcal{R}_{1}\times\mathcal{U}_{2}$ the set of
relaxed-singular controls.
\end{definition}

\ 

For more details on relaxed controls, see $\left[  3\right]  ,\left[
4\right]  ,\left[  5\right]  ,\left[  15\right]  ,\left[  16\right]  ,\left[
24\right]  ,\left[  25\right]  $ and $\left[  26\right]  $.

\ 

For any $\left(  q,\eta\right)  \in\mathcal{R}$, we consider the following
relaxed-singular SDE%
\begin{equation}
\left\{
\begin{array}
[c]{l}%
dx_{t}^{\left(  q,\eta\right)  }=%
%TCIMACRO{\dint \nolimits_{U_{1}}}%
%BeginExpansion
{\displaystyle\int\nolimits_{U_{1}}}
%EndExpansion
b\left(  t,x_{t}^{\left(  q,\eta\right)  },a\right)  q_{t}\left(  da\right)
dt+%
%TCIMACRO{\dint \nolimits_{U_{1}}}%
%BeginExpansion
{\displaystyle\int\nolimits_{U_{1}}}
%EndExpansion
\sigma\left(  t,x_{t}^{\left(  q,\eta\right)  },a\right)  q_{t}\left(
da\right)  dW_{t}+G_{t}d\eta_{t}\\
x_{0}^{\left(  q,\eta\right)  }=x.
\end{array}
\right.
\end{equation}

The expected cost to be minimized, in the relaxed-singular model, is defined
from $\mathcal{R}$ into $\mathbb{R}$ by%
\begin{equation}
\mathcal{J}\left(  q,\eta\right)  =\mathbb{E}\left[  g\left(  x_{T}^{\left(
q,\eta\right)  }\right)  +\int_{0}^{T}%
%TCIMACRO{\dint \nolimits_{U_{1}}}%
%BeginExpansion
{\displaystyle\int\nolimits_{U_{1}}}
%EndExpansion
h\left(  t,x_{t}^{\left(  q,\eta\right)  },a\right)  q_{t}\left(  da\right)
dt+\int_{0}^{T}k_{t}d\eta_{t}\right]  .
\end{equation}

A relaxed-singular control $\left(  \mu,\xi\right)  $ is called optimal if it
solves%
\begin{equation}
\mathcal{J}\left(  \mu,\xi\right)  =\inf\limits_{\left(  q,\eta\right)
\in\mathcal{R}}\mathcal{J}\left(  q,\eta\right)  .
\end{equation}

Haussmann and Suo $\left[  17\right]  $ have proved that the relaxed-singular
control problem admits an optimal solution under general conditions on the
coefficients. Indeed, by using a compactification method and under some mild
continuity hypotheses on the data, it is shown by purely probabilistic
arguments that an optimal solution for the problem exists. Moreover, the value
function is shown to be Borel measurable. See Haussmann and Suo $\left[
17\right]  $, Section 3, page 925 to page 934 and essentially Theorem 3.8,
page 933.

\begin{remark}
If we put for any $\left(  q,\eta\right)  \in\mathcal{R}$%
\begin{align*}
\overline{b}\left(  t,x_{t}^{\left(  q,\eta\right)  },q_{t}\right)   &  =%
%TCIMACRO{\dint \nolimits_{U_{1}}}%
%BeginExpansion
{\displaystyle\int\nolimits_{U_{1}}}
%EndExpansion
b\left(  t,x_{t}^{\left(  q,\eta\right)  },a\right)  q_{t}\left(  da\right)
,\\
\overline{\sigma}\left(  t,x_{t}^{\left(  q,\eta\right)  },q_{t}\right)   &  =%
%TCIMACRO{\dint \nolimits_{U_{1}}}%
%BeginExpansion
{\displaystyle\int\nolimits_{U_{1}}}
%EndExpansion
\sigma\left(  t,x_{t}^{\left(  q,\eta\right)  },a\right)  q_{t}\left(
da\right)  ,\\
\overline{h}\left(  t,x_{t}^{\left(  q,\eta\right)  },q_{t}\right)   &  =%
%TCIMACRO{\dint \nolimits_{U_{1}}}%
%BeginExpansion
{\displaystyle\int\nolimits_{U_{1}}}
%EndExpansion
h\left(  t,x_{t}^{\left(  q,\eta\right)  },a\right)  q_{t}\left(  da\right)  .
\end{align*}

Then, equation $\left(  5\right)  $ becomes
\[
\left\{
\begin{array}
[c]{l}%
dx_{t}^{\left(  q,\eta\right)  }=\overline{b}\left(  t,x_{t}^{\left(
q,\eta\right)  },q_{t}\right)  dt+\overline{\sigma}\left(  t,x_{t}^{\left(
q,\eta\right)  },q_{t}\right)  dW_{t}+G_{t}d\eta_{t},\\
x_{T}^{\left(  q,\eta\right)  }=x.
\end{array}
\right.
\]

With a functional cost given by
\[
\mathcal{J}\left(  q,\eta\right)  =\mathbb{E}\left[  g\left(  x_{T}^{\left(
q,\eta\right)  }\right)  +\int_{0}^{T}\overline{h}\left(  t,x_{t}^{\left(
q,\eta\right)  },q_{t}\right)  dt+\int_{0}^{T}k_{t}d\eta_{t}\right]  .
\]

Hence, by introducing relaxed-singular controls, we have replaced $U_{1}$ by a
larger space $\mathbb{P}\left(  U_{1}\right)  $. We have gained the advantage
that $\mathbb{P}\left(  U_{1}\right)  $ is convex. Furthermore, the new
coefficients of equation $\left(  5\right)  $ and the running cost are linear
with respect to the relaxed control variable.
\end{remark}

\begin{remark}
The coefficients $\overline{b}\ $and $\overline{\sigma}$ (defined in the above
remark) check respectively the same assumptions as $b$ and $\sigma$. Then,
under assumptions $\left(  4\right)  $, for every $\left(  q,\eta\right)
\in\mathcal{R}$, equation $\left(  5\right)  $ has an unique strong solution.

On the other hand, It is easy to see that $\overline{h}$ checks the same
assumptions as $h$. Then, the functional cost $\mathcal{J}$ is well defined
from $\mathcal{R}$ into $\mathbb{R}$.
\end{remark}

\begin{remark}
If $q_{t}=\delta_{v_{t}}$ is an atomic measure concentrated at a single point
$v_{t}\in\mathbb{P}\left(  U_{1}\right)  $, then for each $t\in\left[
0,T\right]  $ we have%
\begin{align*}
x^{\left(  q,\eta\right)  }  &  =x^{\left(  v,\eta\right)  },\\
\mathcal{J}\left(  q,\eta\right)   &  =J\left(  v,\eta\right)  ,
\end{align*}
and we get a strict-singular control problem. So the problem of
strict-singular controls $\left\{  \left(  1\right)  ,\left(  2\right)
,\left(  3\right)  \right\}  $ is a particular case of relaxed-singular
control problem $\left\{  \left(  5\right)  ,\left(  6\right)  ,\left(
7\right)  \right\}  $.
\end{remark}

\section{Optimality conditions for relaxed-singular controls}

\ 

In this section, we study the problem $\left\{  \left(  5\right)  ,\left(
6\right)  ,\left(  7\right)  \right\}  $ and we establish necessary as well as
sufficient conditions of optimality for relaxed-singular controls.

\subsection{Preliminary results}

\ 

Since the set of relaxed-singular controls $\mathcal{R}$ is convex, a
classical way of treating such a problem is to use the convex perturbation
method. More precisely, let $\left(  \mu,\xi\right)  $ be an optimal
relaxed-singular control and $x_{t}^{\left(  \mu,\xi\right)  }$ the solution
of $\left(  5\right)  $ controlled by $\left(  \mu,\xi\right)  $. Then, for
each $t\in\left[  0,T\right]  $, we can define a perturbed relaxed-singular
control as follows%
\[
\left(  \mu_{t}^{\theta},\xi_{t}^{\theta}\right)  =\left(  \mu_{t},\xi
_{t}\right)  +\theta\left[  \left(  q_{t},\eta_{t}\right)  -\left(  \mu
_{t},\xi_{t}\right)  \right]  ,
\]
where, $\theta>0$ is sufficiently small and $\left(  q,\eta\right)  $ is an
arbitrary element of $\mathcal{R}$.

Denote by $x_{t}^{\left(  \mu^{\theta},\xi^{\theta}\right)  }$ the solution of
$\left(  5\right)  $ associated with $\left(  \mu^{\theta},\xi^{\theta
}\right)  $.

From optimality of $\left(  \mu,\xi\right)  $, the variational inequality will
be derived from the fact that
\[
0\leq\mathcal{J}\left(  \mu^{\theta},\xi^{\theta}\right)  -\mathcal{J}\left(
\mu,\xi\right)  .
\]

For this end, we need the following classical lemmas.

\begin{lemma}
\textit{Under assumptions }$\left(  4\right)  $, we have%
\begin{equation}
\underset{\theta\rightarrow0}{\lim}\left[  \underset{t\in\left[  0,T\right]
}{\sup}\mathbb{E}\left\vert x_{t}^{\left(  \mu^{\theta},\xi^{\theta}\right)
}-x_{t}^{\left(  \mu,\xi\right)  }\right\vert ^{2}\right]  =0.
\end{equation}

\end{lemma}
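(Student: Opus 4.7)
The plan is to prove the $L^{2}$-stability of the state with respect to the convex perturbation by a standard Gronwall argument, carefully tracking the factor $\theta$. Set $\Delta_{t}:=x_{t}^{(\mu^{\theta},\xi^{\theta})}-x_{t}^{(\mu,\xi)}$ and write $\overline{b},\overline{\sigma}$ for the relaxed coefficients introduced in the remark above. Since $\mu_{t}^{\theta}=(1-\theta)\mu_{t}+\theta q_{t}$ and $\xi_{t}^{\theta}=(1-\theta)\xi_{t}+\theta\eta_{t}$, subtracting the two integral equations gives
\begin{align*}
\Delta_{t}&=\int_{0}^{t}\bigl[\overline{b}(s,x_{s}^{\theta},\mu_{s}^{\theta})-\overline{b}(s,x_{s},\mu_{s})\bigr]\,ds\\
&\quad+\int_{0}^{t}\bigl[\overline{\sigma}(s,x_{s}^{\theta},\mu_{s}^{\theta})-\overline{\sigma}(s,x_{s},\mu_{s})\bigr]\,dW_{s}+\theta\int_{0}^{t}G_{s}\,d(\eta_{s}-\xi_{s}),
\end{align*}
where for brevity $x^{\theta}=x^{(\mu^{\theta},\xi^{\theta})}$ and $x=x^{(\mu,\xi)}$.

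The decisive step is to exploit the linearity of $\overline{b}$ and $\overline{\sigma}$ in the relaxed measure variable in order to extract an explicit factor $\theta$: one writes
\[
\overline{b}(s,x_{s}^{\theta},\mu_{s}^{\theta})-\overline{b}(s,x_{s},\mu_{s})=\bigl[\overline{b}(s,x_{s}^{\theta},\mu_{s}^{\theta})-\overline{b}(s,x_{s},\mu_{s}^{\theta})\bigr]+\theta\bigl[\overline{b}(s,x_{s},q_{s})-\overline{b}(s,x_{s},\mu_{s})\bigr],
\]
and symmetrically for $\overline{\sigma}$. By assumption $(4)$, $\overline{b}_{x}$ and $\overline{\sigma}_{x}$ are uniformly bounded, so the first brackets are dominated by $C|\Delta_{s}|$; the linear growth of $b,\sigma$ combined with admissibility of $\mu$ and $q$ gives a uniform $L^{2}(\Omega)$-bound on the second brackets, so their contribution is genuinely of order $\theta$.

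Squaring $\Delta_{t}$, taking expectations, and applying Cauchy-Schwarz to the Lebesgue integral, It\^{o}'s isometry to the stochastic integral, and the bound $\mathbb{E}\bigl|\int_{0}^{t}G_{s}\,d(\eta_{s}-\xi_{s})\bigr|^{2}\leq C(\mathbb{E}|\eta_{T}|^{2}+\mathbb{E}|\xi_{T}|^{2})$ (which uses boundedness of $G$ and admissibility of $\eta,\xi$) to the singular integral, one obtains
\[
\mathbb{E}|\Delta_{t}|^{2}\leq C\int_{0}^{t}\mathbb{E}|\Delta_{s}|^{2}\,ds+C\theta^{2},\qquad t\in[0,T],
\]
and Gronwall's lemma then yields $\sup_{t\in[0,T]}\mathbb{E}|\Delta_{t}|^{2}\leq C\theta^{2}e^{CT}\to 0$ as $\theta\to 0$. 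The only point I expect to require some care is the uniform $L^{2}(\Omega)$-bound for the pure perturbation term $\overline{b}(s,x_{s},q_{s})-\overline{b}(s,x_{s},\mu_{s})$ and its $\overline{\sigma}$ analogue: this rests on combining linear growth of $b,\sigma$ with the square integrability of the ``barycentres'' $\int|a|\,q_{s}(da)$ and $\int|a|\,\mu_{s}(da)$ of the relaxed controls, which is built into their admissibility. All other steps reduce to routine $L^{2}$-estimates once the linearity decomposition is in place.
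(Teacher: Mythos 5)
Your proof is correct and follows essentially the same route as the paper: decompose the increment by separating the change in the state argument (controlled by the Lipschitz/bounded-derivative assumption) from the change in the measure argument (which carries an explicit factor $\theta$ by linearity of the relaxed coefficients), bound the singular term using boundedness of $G$ and square integrability of $\eta_T,\xi_T$, and conclude by Gronwall. If anything you are slightly more explicit than the paper about why the pure perturbation terms are uniformly $L^{2}$-bounded, and your use of It\^{o}'s isometry suffices here since the claim involves $\sup_{t}\mathbb{E}$ rather than $\mathbb{E}\sup_{t}$.
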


\begin{proof}
We have
\begin{align*}
&  x_{t}^{\left(  \mu^{\theta},\xi^{\theta}\right)  }-x_{t}^{\left(  \mu
,\xi\right)  }\\
&  =%
%TCIMACRO{\dint \nolimits_{0}^{t}}%
%BeginExpansion
{\displaystyle\int\nolimits_{0}^{t}}
%EndExpansion
\left[
%TCIMACRO{\dint \nolimits_{U_{1}}}%
%BeginExpansion
{\displaystyle\int\nolimits_{U_{1}}}
%EndExpansion
b\left(  s,x_{s}^{\left(  \mu^{\theta},\xi^{\theta}\right)  },a\right)
\mu_{s}^{\theta}\left(  da\right)  -%
%TCIMACRO{\dint \nolimits_{U_{1}}}%
%BeginExpansion
{\displaystyle\int\nolimits_{U_{1}}}
%EndExpansion
b\left(  s,x_{s}^{\left(  \mu,\xi\right)  },a\right)  \mu_{s}^{\theta}\left(
da\right)  \right]  ds\\
&  +%
%TCIMACRO{\dint \nolimits_{0}^{t}}%
%BeginExpansion
{\displaystyle\int\nolimits_{0}^{t}}
%EndExpansion
\left[
%TCIMACRO{\dint \nolimits_{U_{1}}}%
%BeginExpansion
{\displaystyle\int\nolimits_{U_{1}}}
%EndExpansion
b\left(  s,x_{s}^{\left(  \mu,\xi\right)  },a\right)  \mu_{s}^{\theta}\left(
da\right)  -%
%TCIMACRO{\dint \nolimits_{U_{1}}}%
%BeginExpansion
{\displaystyle\int\nolimits_{U_{1}}}
%EndExpansion
b\left(  s,x_{s}^{\left(  \mu,\xi\right)  },a\right)  \mu_{s}\left(
da\right)  \right]  ds\\
&  +%
%TCIMACRO{\dint \nolimits_{0}^{t}}%
%BeginExpansion
{\displaystyle\int\nolimits_{0}^{t}}
%EndExpansion
\left[
%TCIMACRO{\dint \nolimits_{U_{1}}}%
%BeginExpansion
{\displaystyle\int\nolimits_{U_{1}}}
%EndExpansion
\sigma\left(  s,x_{s}^{\left(  \mu^{\theta},\xi^{\theta}\right)  },a\right)
\mu_{s}^{\theta}\left(  da\right)  -%
%TCIMACRO{\dint \nolimits_{U_{1}}}%
%BeginExpansion
{\displaystyle\int\nolimits_{U_{1}}}
%EndExpansion
\sigma\left(  s,x_{s}^{\left(  \mu,\xi\right)  },a\right)  \mu_{s}^{\theta
}\left(  da\right)  \right]  dW_{s}\\
&  +%
%TCIMACRO{\dint \nolimits_{0}^{t}}%
%BeginExpansion
{\displaystyle\int\nolimits_{0}^{t}}
%EndExpansion
\left[
%TCIMACRO{\dint \nolimits_{U_{1}}}%
%BeginExpansion
{\displaystyle\int\nolimits_{U_{1}}}
%EndExpansion
\sigma\left(  s,x_{s}^{\left(  \mu,\xi\right)  },a\right)  \mu_{s}^{\theta
}\left(  da\right)  -%
%TCIMACRO{\dint \nolimits_{U_{1}}}%
%BeginExpansion
{\displaystyle\int\nolimits_{U_{1}}}
%EndExpansion
\sigma\left(  s,x_{s}^{\left(  \mu,\xi\right)  },a\right)  \mu_{s}\left(
da\right)  \right]  dW_{s}\\
&  +%
%TCIMACRO{\dint \nolimits_{0}^{t}}%
%BeginExpansion
{\displaystyle\int\nolimits_{0}^{t}}
%EndExpansion
G_{t}d\left(  \xi_{t}^{\theta}-\xi_{t}\right)  .
\end{align*}

By using the definition of $\left(  \mu^{\theta},\xi^{\theta}\right)  $ and
taking expectation, we have
\begin{align*}
&  \mathbb{E}\left\vert x_{t}^{\left(  \mu^{\theta},\xi^{\theta}\right)
}-x_{t}^{\left(  \mu,\xi\right)  }\right\vert ^{2}\\
&  \leq C\mathbb{E}%
%TCIMACRO{\dint \nolimits_{0}^{t}}%
%BeginExpansion
{\displaystyle\int\nolimits_{0}^{t}}
%EndExpansion
\left\vert
%TCIMACRO{\dint \nolimits_{U_{1}}}%
%BeginExpansion
{\displaystyle\int\nolimits_{U_{1}}}
%EndExpansion
b\left(  s,x_{s}^{\left(  \mu^{\theta},\xi^{\theta}\right)  },a\right)
\mu_{s}\left(  da\right)  -%
%TCIMACRO{\dint \nolimits_{U_{1}}}%
%BeginExpansion
{\displaystyle\int\nolimits_{U_{1}}}
%EndExpansion
b\left(  s,x_{s}^{\left(  \mu,\xi\right)  },a\right)  \mu_{s}\left(
da\right)  \right\vert ^{2}ds\\
&  +C\theta^{2}\mathbb{E}%
%TCIMACRO{\dint \nolimits_{0}^{t}}%
%BeginExpansion
{\displaystyle\int\nolimits_{0}^{t}}
%EndExpansion
\left\vert
%TCIMACRO{\dint \nolimits_{U_{1}}}%
%BeginExpansion
{\displaystyle\int\nolimits_{U_{1}}}
%EndExpansion
b\left(  s,x_{s}^{\left(  \mu^{\theta},\xi^{\theta}\right)  },a\right)
q_{s}\left(  da\right)  -%
%TCIMACRO{\dint \nolimits_{U_{1}}}%
%BeginExpansion
{\displaystyle\int\nolimits_{U_{1}}}
%EndExpansion
b\left(  s,x_{s}^{\left(  \mu^{\theta},\xi^{\theta}\right)  },a\right)
\mu_{s}\left(  da\right)  \right\vert ^{2}ds\\
&  +C\mathbb{E}%
%TCIMACRO{\dint \nolimits_{0}^{t}}%
%BeginExpansion
{\displaystyle\int\nolimits_{0}^{t}}
%EndExpansion
\left\vert
%TCIMACRO{\dint \nolimits_{U_{1}}}%
%BeginExpansion
{\displaystyle\int\nolimits_{U_{1}}}
%EndExpansion
\sigma\left(  s,x_{s}^{\left(  \mu^{\theta},\xi^{\theta}\right)  },a\right)
\mu_{s}\left(  da\right)  -%
%TCIMACRO{\dint \nolimits_{U_{1}}}%
%BeginExpansion
{\displaystyle\int\nolimits_{U_{1}}}
%EndExpansion
\sigma\left(  s,x_{s}^{\left(  \mu,\xi\right)  },a\right)  \mu_{s}\left(
da\right)  \right\vert ^{2}ds\\
&  +C\theta^{2}\mathbb{E}%
%TCIMACRO{\dint \nolimits_{0}^{t}}%
%BeginExpansion
{\displaystyle\int\nolimits_{0}^{t}}
%EndExpansion
\left\vert
%TCIMACRO{\dint \nolimits_{U_{1}}}%
%BeginExpansion
{\displaystyle\int\nolimits_{U_{1}}}
%EndExpansion
\sigma\left(  s,x_{s}^{\left(  \mu^{\theta},\xi^{\theta}\right)  },a\right)
q_{s}\left(  da\right)  -%
%TCIMACRO{\dint \nolimits_{U_{1}}}%
%BeginExpansion
{\displaystyle\int\nolimits_{U_{1}}}
%EndExpansion
\sigma\left(  s,x_{s}^{\left(  \mu^{\theta},\xi^{\theta}\right)  },a\right)
\mu_{s}\left(  da\right)  \right\vert ^{2}ds\\
&  +C\theta\mathbb{E}\left\vert
%TCIMACRO{\dint \nolimits_{0}^{t}}%
%BeginExpansion
{\displaystyle\int\nolimits_{0}^{t}}
%EndExpansion
G_{t}d\left(  \eta_{t}-\xi_{t}\right)  \right\vert ^{2}.
\end{align*}

Since $b$ and $\sigma$ are uniformly Lipschitz with respect to $x$, and $G$ is
bounded, then%
\[
\mathbb{E}\left\vert x_{t}^{\left(  \mu^{\theta},\xi^{\theta}\right)  }%
-x_{t}^{\left(  \mu,\xi\right)  }\right\vert ^{2}\leq C\mathbb{E}%
%TCIMACRO{\dint \nolimits_{0}^{t}}%
%BeginExpansion
{\displaystyle\int\nolimits_{0}^{t}}
%EndExpansion
\left\vert x_{s}^{\left(  \mu^{\theta},\xi^{\theta}\right)  }-x_{s}^{\left(
\mu,\xi\right)  }\right\vert ^{2}ds+C\theta E\left\vert \eta_{T}-\xi
_{T}\right\vert ^{2}+C\theta^{2}.
\]

By using Gronwall's lemma and Buckholder-Davis-Gundy inequality, we obtain the
desired result.
\end{proof}

\begin{lemma}
\textit{Let }$z$\textit{\ be the solution of the linear SDE (called
variational equation)}%
\begin{equation}
\left\{
\begin{array}
[c]{ll}%
dz_{t}= &
%TCIMACRO{\dint \nolimits_{U_{1}}}%
%BeginExpansion
{\displaystyle\int\nolimits_{U_{1}}}
%EndExpansion
b_{x}\left(  t,x_{t}^{\left(  \mu,\xi\right)  },a\right)  \mu_{t}\left(
da\right)  z_{t}dt\\
& +\left[
%TCIMACRO{\dint \nolimits_{U_{1}}}%
%BeginExpansion
{\displaystyle\int\nolimits_{U_{1}}}
%EndExpansion
b\left(  t,x_{t}^{\left(  \mu,\xi\right)  },a\right)  \mu_{t}\left(
da\right)  -%
%TCIMACRO{\dint \nolimits_{U_{1}}}%
%BeginExpansion
{\displaystyle\int\nolimits_{U_{1}}}
%EndExpansion
b\left(  t,x_{t}^{\left(  \mu,\xi\right)  },a\right)  q_{t}\left(  da\right)
\right]  dt\\
& +%
%TCIMACRO{\dint \nolimits_{U_{1}}}%
%BeginExpansion
{\displaystyle\int\nolimits_{U_{1}}}
%EndExpansion
\sigma_{x}\left(  t,x_{t}^{\left(  \mu,\xi\right)  },a\right)  \mu_{t}\left(
da\right)  z_{t}dW_{t}\\
& +\left[
%TCIMACRO{\dint \nolimits_{U_{1}}}%
%BeginExpansion
{\displaystyle\int\nolimits_{U_{1}}}
%EndExpansion
\sigma\left(  t,x_{t}^{\left(  \mu,\xi\right)  },a\right)  \mu_{t}\left(
da\right)  -%
%TCIMACRO{\dint \nolimits_{U_{1}}}%
%BeginExpansion
{\displaystyle\int\nolimits_{U_{1}}}
%EndExpansion
\sigma\left(  t,x_{t}^{\left(  \mu,\xi\right)  },a\right)  q_{t}\left(
da\right)  \right]  dW_{t}\\
& +G_{t}d\left(  \eta_{t}-\xi_{t}\right)  ,\\
z_{0}= & 0.
\end{array}
\right.
\end{equation}

\textit{Then, we have }
\begin{equation}
\underset{\theta\rightarrow0}{\lim}\mathbb{E}\left\vert
%TCIMACRO{\QDOVERD{.}{.}{x_{t}^{\left(  \mu^{\theta},\xi^{\theta}\right)
%}-x_{t}^{\left(  \mu,\xi\right)  }}{\theta}}%
%BeginExpansion
\genfrac{.}{.}{}{0}{x_{t}^{\left(  \mu^{\theta},\xi^{\theta}\right)  }%
-x_{t}^{\left(  \mu,\xi\right)  }}{\theta}%
%EndExpansion
-z_{t}\right\vert ^{2}=0.
\end{equation}

\end{lemma}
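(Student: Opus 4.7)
The plan is to set $y^\theta_t := \theta^{-1}\bigl(x^{(\mu^\theta,\xi^\theta)}_t - x^{(\mu,\xi)}_t\bigr) - z_t$, derive a linear SDE for $y^\theta$ whose source terms vanish in $L^2$ as $\theta\to 0$, and conclude with a Gronwall estimate. A first simplification is that the singular drivers cancel exactly: since $\xi^\theta - \xi = \theta(\eta - \xi)$, dividing $G_t\,d(\xi^\theta - \xi)$ by $\theta$ reproduces the driver $G_t\,d(\eta - \xi)$ appearing in $(9)$, so no $d\eta$-type term survives in the equation for $y^\theta$.

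For the drift and diffusion, I would use the standard additive splitting
$$\int_{U_1}\! b(s,x^{(\mu^\theta,\xi^\theta)}_s,a)\mu^\theta_s(da) - \int_{U_1}\! b(s,x^{(\mu,\xi)}_s,a)\mu_s(da) = A^\theta_s + C^\theta_s,$$
where $A^\theta_s := \int_{U_1}[b(s,x^{(\mu^\theta,\xi^\theta)}_s,a) - b(s,x^{(\mu,\xi)}_s,a)]\mu^\theta_s(da)$ is the state-variation piece and $C^\theta_s := \int_{U_1} b(s,x^{(\mu,\xi)}_s,a)(\mu^\theta_s - \mu_s)(da)$ is the control-variation piece, with the analogous decomposition for $\sigma$. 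Since $\mu^\theta - \mu = \theta(q - \mu)$, the term $\theta^{-1}C^\theta_s$ reproduces verbatim the control-perturbation driver of $(9)$. For $A^\theta_s$, Taylor's theorem gives $A^\theta_s = \theta\,\bar B^\theta_s (y^\theta_s + z_s)$ with $\bar B^\theta_s := \int_{U_1}\int_0^1 b_x(s, x^{(\mu,\xi)}_s + \lambda(x^{(\mu^\theta,\xi^\theta)}_s - x^{(\mu,\xi)}_s), a)\,d\lambda\,\mu^\theta_s(da)$, and an analogous $\bar\Sigma^\theta_s$ is built from $\sigma_x$.

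Assembling these contributions, the remainder satisfies a linear SDE of the form
$$y^\theta_t = \int_0^t \bar B^\theta_s\, y^\theta_s\,ds + \int_0^t \bar\Sigma^\theta_s\, y^\theta_s\,dW_s + \int_0^t R^\theta_s\,ds + \int_0^t \widetilde R^\theta_s\,dW_s,$$
where $\bar B^\theta_s$ and $\bar\Sigma^\theta_s$ are uniformly bounded by assumption $(4)$, and the source terms collect the mismatches such as $[\bar B^\theta_s - \int_{U_1} b_x(s,x^{(\mu,\xi)}_s,a)\mu_s(da)]\,z_s$ and its diffusion counterpart. Squaring, taking expectation, applying Burkholder--Davis--Gundy to the martingale part and then Gronwall's lemma yields the estimate $\sup_{t\leq T}\mathbb{E}|y^\theta_t|^2 \leq C\,\mathbb{E}\int_0^T(|R^\theta_s|^2 + |\widetilde R^\theta_s|^2)\,ds$.

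The main obstacle is showing that these source integrals vanish as $\theta\to 0$. Each mismatch splits into a state-convergence part, controlled through Lemma 6, the continuity of $b_x$ in $x$, the uniform bound on $b_x$, dominated convergence along an almost-sure subsequence, and the energy bound $\mathbb{E}\int_0^T|z_s|^2\,ds<\infty$ obtained from a routine estimate on $(9)$; and a control-deviation part whose integrand picks up the factor $\mu^\theta - \mu = \theta(q - \mu)$ and hence is $O(\theta)$ in $L^2$ by boundedness of $b_x$. The analogous treatment for $\sigma_x$ closes the argument and delivers $(10)$.
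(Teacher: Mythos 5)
Your proposal is correct and follows essentially the same route as the paper: the paper's own proof consists precisely of your opening observation that the singular drivers cancel exactly (since $\xi^{\theta}-\xi=\theta(\eta-\xi)$), after which it defers the remaining Taylor-expansion, source-term, and Gronwall argument to $\left[5,\ \text{Lemma 10}\right]$, which is exactly what you carry out in detail. The only cosmetic point is that your control-variation piece naturally produces the driver in the direction $q-\mu$, matching the standard convention rather than the sign as literally printed in equation $(9)$.
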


\begin{proof}
It is easy to see that
\[%
%TCIMACRO{\QDOVERD{.}{.}{x_{t}^{\left(  \mu^{\theta},\xi^{\theta}\right)
%}-x_{t}^{\left(  \mu,\xi\right)  }}{\theta}}%
%BeginExpansion
\genfrac{.}{.}{}{0}{x_{t}^{\left(  \mu^{\theta},\xi^{\theta}\right)  }%
-x_{t}^{\left(  \mu,\xi\right)  }}{\theta}%
%EndExpansion
-z_{t},
\]
does not depends on the singular part. Then the result follows immediately by
the same method that in $\left[  5,\ \text{Lemma\ 10,\ page 2086-2088}\right]
$.
\end{proof}

\begin{lemma}
\textit{Let }$\left(  \mu,\xi\right)  $\textit{\ be an optimal
relaxed-singular control minimizing the cost }$\mathcal{J}$\textit{\ over
}$\mathcal{R}$\textit{\ and }$x_{t}^{\left(  \mu,\xi\right)  }$\textit{\ the
associated optimal trajectory. Then, for any }$\left(  q,\eta\right)
\in\mathcal{R}$,\textit{\ we have}
\begin{align}
0  &  \leq\mathbb{E}\left[  g_{x}\left(  x_{T}^{\left(  \mu,\xi\right)
}\right)  z_{T}\right]  +\mathbb{E}%
%TCIMACRO{\dint \nolimits_{0}^{T}}%
%BeginExpansion
{\displaystyle\int\nolimits_{0}^{T}}
%EndExpansion%
%TCIMACRO{\dint \nolimits_{U_{1}}}%
%BeginExpansion
{\displaystyle\int\nolimits_{U_{1}}}
%EndExpansion
h_{x}\left(  t,x_{t}^{\left(  \mu,\xi\right)  },a\right)  \mu_{t}\left(
da\right)  z_{t}dt\\
&  +\mathbb{E}%
%TCIMACRO{\dint \nolimits_{0}^{T}}%
%BeginExpansion
{\displaystyle\int\nolimits_{0}^{T}}
%EndExpansion
\left[
%TCIMACRO{\dint \nolimits_{U_{1}}}%
%BeginExpansion
{\displaystyle\int\nolimits_{U_{1}}}
%EndExpansion
h\left(  t,x_{t}^{\left(  \mu,\xi\right)  },a\right)  q_{t}\left(  da\right)
-%
%TCIMACRO{\dint \nolimits_{U_{1}}}%
%BeginExpansion
{\displaystyle\int\nolimits_{U_{1}}}
%EndExpansion
h\left(  t,x_{t}^{\left(  \mu,\xi\right)  },a\right)  \mu_{t}\left(
da\right)  \right]  dt\nonumber\\
&  +E\int_{0}^{T}k_{t}d\left(  \eta_{t}-\xi_{t}\right)  .\nonumber
\end{align}

\end{lemma}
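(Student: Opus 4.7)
The plan is to pass to the limit $\theta\to 0$ in the inequality $0\leq\theta^{-1}\bigl[\mathcal{J}(\mu^{\theta},\xi^{\theta})-\mathcal{J}(\mu,\xi)\bigr]$ and identify each of the three resulting terms with the corresponding term of $(10)$.

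First I would split the difference of costs into three natural pieces: a terminal piece $\mathbb{E}\bigl[g(x_T^{(\mu^{\theta},\xi^{\theta})})-g(x_T^{(\mu,\xi)})\bigr]$, a running piece involving $h$, and a singular piece $\mathbb{E}\int_0^T k_t\,d(\xi_t^{\theta}-\xi_t)$. The singular piece is immediate: by definition of $\xi^{\theta}$ we have $\xi^{\theta}-\xi=\theta(\eta-\xi)$, so after division by $\theta$ it is already exactly $\mathbb{E}\int_0^T k_t\,d(\eta_t-\xi_t)$ for every $\theta>0$, with no limit to take.

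For the terminal piece, I would write $g(x_T^{(\mu^{\theta},\xi^{\theta})})-g(x_T^{(\mu,\xi)})=\int_0^1 g_x\bigl(x_T^{(\mu,\xi)}+\lambda(x_T^{(\mu^{\theta},\xi^{\theta})}-x_T^{(\mu,\xi)})\bigr)\,d\lambda\cdot\bigl(x_T^{(\mu^{\theta},\xi^{\theta})}-x_T^{(\mu,\xi)}\bigr)$, divide by $\theta$, and use Lemma 8 together with $L^2$-continuity of $g_x$ along the perturbed trajectories (guaranteed by Lemma 7 and the boundedness of $g_x$) to get the limit $\mathbb{E}[g_x(x_T^{(\mu,\xi)})z_T]$. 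For the running piece I would first algebraically decompose
\begin{align*}
&\int_{U_1} h(t,x_t^{(\mu^{\theta},\xi^{\theta})},a)\mu_t^{\theta}(da)-\int_{U_1}h(t,x_t^{(\mu,\xi)},a)\mu_t(da)\\
&=\int_{U_1}\bigl[h(t,x_t^{(\mu^{\theta},\xi^{\theta})},a)-h(t,x_t^{(\mu,\xi)},a)\bigr]\mu_t^{\theta}(da)+\theta\int_{U_1}h(t,x_t^{(\mu,\xi)},a)\bigl[q_t(da)-\mu_t(da)\bigr],
\end{align*}
using $\mu_t^{\theta}-\mu_t=\theta(q_t-\mu_t)$. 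The second summand, after division by $\theta$, is already the desired measure-difference term. For the first summand I would again apply the mean-value identity in $x$ and pass to the limit using Lemma 8 and the boundedness of $h_x$; since $\mu_t^{\theta}\to\mu_t$ in the convex combination sense, the factor $\mu_t^{\theta}(da)$ is easily handled and the limit is $\mathbb{E}\int_0^T\int_{U_1}h_x(t,x_t^{(\mu,\xi)},a)\mu_t(da)z_t\,dt$.

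The main technical obstacle is justifying the two limit exchanges in $L^1(dt\otimes d\mathbb{P})$ when the derivatives $g_x$ and $h_x$ are evaluated along the mean-value trajectory. This is resolved by the uniform boundedness of $g_x,h_x$ assumed in $(4)$ (which gives a dominating constant times $|x_t^{(\mu^{\theta},\xi^{\theta})}-x_t^{(\mu,\xi)}|/\theta$, uniformly $L^2$-bounded by Lemma 8) combined with dominated convergence; the continuity of $g_x,h_x$ in $x$ lets one replace the mean-value derivative by the derivative at $x_t^{(\mu,\xi)}$ in the limit. Assembling the three limits and using the optimality inequality yields exactly $(10)$.
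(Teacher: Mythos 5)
Your proposal is correct and follows essentially the same route as the paper: divide the optimality inequality by $\theta$, note that the singular term is exactly $\mathbb{E}\int_0^T k_t\,d(\eta_t-\xi_t)$ since $\xi^{\theta}-\xi=\theta(\eta-\xi)$, expand $g$ and $h$ by the mean-value identity, and pass to the limit using Lemma 8 and the boundedness of $g_x,h_x$ via dominated convergence. The only (immaterial) difference is the bookkeeping in the running-cost term: you attach $\mu_t^{\theta}(da)$ to the $x$-increment and evaluate the measure-difference term at $x_t^{(\mu,\xi)}$ from the start, whereas the paper attaches $\mu_t(da)$ to the increment and initially carries the measure-difference term along the perturbed trajectory; both reduce to the same limit.
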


\begin{proof}
Since $\left(  \mu,\xi\right)  $ minimizes the cost $\mathcal{J}%
$\textit{\ over }$\mathcal{R}$, then%
\begin{align*}
0  &  \leq\mathcal{J}\left(  \mu^{\theta},\xi^{\theta}\right)  -\mathcal{J}%
\left(  \mu,\xi\right) \\
&  \leq\mathbb{E}\left[  g\left(  x_{T}^{\left(  \mu^{\theta},\xi^{\theta
}\right)  }\right)  -g\left(  x_{T}^{\left(  \mu,\xi\right)  }\right)  \right]
\\
&  +\mathbb{E}%
%TCIMACRO{\dint \nolimits_{0}^{T}}%
%BeginExpansion
{\displaystyle\int\nolimits_{0}^{T}}
%EndExpansion%
%TCIMACRO{\dint \nolimits_{U_{1}}}%
%BeginExpansion
{\displaystyle\int\nolimits_{U_{1}}}
%EndExpansion
h\left(  t,x_{t}^{\left(  \mu^{\theta},\xi^{\theta}\right)  },a\right)
\mu_{t}^{\theta}\left(  da\right)  dt-\mathbb{E}%
%TCIMACRO{\dint \nolimits_{0}^{T}}%
%BeginExpansion
{\displaystyle\int\nolimits_{0}^{T}}
%EndExpansion%
%TCIMACRO{\dint \nolimits_{U_{1}}}%
%BeginExpansion
{\displaystyle\int\nolimits_{U_{1}}}
%EndExpansion
h\left(  t,x_{t}^{\left(  \mu,\xi\right)  },a\right)  \mu_{t}^{\theta}\left(
da\right)  dt\\
&  +\mathbb{E}%
%TCIMACRO{\dint \nolimits_{0}^{T}}%
%BeginExpansion
{\displaystyle\int\nolimits_{0}^{T}}
%EndExpansion%
%TCIMACRO{\dint \nolimits_{U_{1}}}%
%BeginExpansion
{\displaystyle\int\nolimits_{U_{1}}}
%EndExpansion
h\left(  t,x_{t}^{\left(  \mu,\xi\right)  },a\right)  \mu_{t}^{\theta}\left(
da\right)  dt-\mathbb{E}%
%TCIMACRO{\dint \nolimits_{0}^{T}}%
%BeginExpansion
{\displaystyle\int\nolimits_{0}^{T}}
%EndExpansion%
%TCIMACRO{\dint \nolimits_{U_{1}}}%
%BeginExpansion
{\displaystyle\int\nolimits_{U_{1}}}
%EndExpansion
h\left(  t,x_{t}^{\left(  \mu,\xi\right)  },a\right)  \mu_{t}\left(
da\right)  dt\\
&  +\mathbb{E}%
%TCIMACRO{\dint \nolimits_{0}^{T}}%
%BeginExpansion
{\displaystyle\int\nolimits_{0}^{T}}
%EndExpansion
k_{t}d\left(  \xi_{t}^{\theta}-\xi_{t}\right)  .
\end{align*}

By using the definition of $\left(  \mu^{\theta},\xi^{\theta}\right)  $, we
get%
\begin{align*}
0  &  \leq\mathbb{E}\left[  g\left(  x_{T}^{\left(  \mu^{\theta},\xi^{\theta
}\right)  }\right)  -g\left(  x_{T}^{\left(  \mu,\xi\right)  }\right)  \right]
\\
&  +\mathbb{E}%
%TCIMACRO{\dint \nolimits_{0}^{T}}%
%BeginExpansion
{\displaystyle\int\nolimits_{0}^{T}}
%EndExpansion
\left[
%TCIMACRO{\dint \nolimits_{U_{1}}}%
%BeginExpansion
{\displaystyle\int\nolimits_{U_{1}}}
%EndExpansion
h\left(  t,x_{t}^{\left(  \mu^{\theta},\xi^{\theta}\right)  },a\right)
\mu_{t}\left(  da\right)  -%
%TCIMACRO{\dint \nolimits_{U_{1}}}%
%BeginExpansion
{\displaystyle\int\nolimits_{U_{1}}}
%EndExpansion
h\left(  t,x_{t}^{\left(  \mu,\xi\right)  },a\right)  \mu_{t}\left(
da\right)  \right]  dt\\
&  +\theta\mathbb{E}%
%TCIMACRO{\dint \nolimits_{0}^{T}}%
%BeginExpansion
{\displaystyle\int\nolimits_{0}^{T}}
%EndExpansion
\left[
%TCIMACRO{\dint \nolimits_{U_{1}}}%
%BeginExpansion
{\displaystyle\int\nolimits_{U_{1}}}
%EndExpansion
h\left(  t,x_{t}^{\left(  \mu^{\theta},\xi^{\theta}\right)  },a\right)
q_{t}\left(  da\right)  -%
%TCIMACRO{\dint \nolimits_{U_{1}}}%
%BeginExpansion
{\displaystyle\int\nolimits_{U_{1}}}
%EndExpansion
h\left(  t,x_{t}^{\left(  \mu^{\theta},\xi^{\theta}\right)  },a\right)
\mu_{t}\left(  da\right)  \right]  dt\\
&  +\theta\mathbb{E}%
%TCIMACRO{\dint \nolimits_{0}^{T}}%
%BeginExpansion
{\displaystyle\int\nolimits_{0}^{T}}
%EndExpansion
k_{t}d\left(  \eta_{t}-\xi_{t}\right)  .
\end{align*}

Hence,%
\begin{align}
0  &  \leq\mathbb{E}%
%TCIMACRO{\dint \nolimits_{0}^{1}}%
%BeginExpansion
{\displaystyle\int\nolimits_{0}^{1}}
%EndExpansion
g_{x}\left(  x_{T}^{\left(  \mu,\xi\right)  }+\lambda\theta\left(
X_{T}^{\theta}+z_{T}\right)  \right)  z_{T}d\lambda\\
&  +\mathbb{E}%
%TCIMACRO{\dint \nolimits_{0}^{T}}%
%BeginExpansion
{\displaystyle\int\nolimits_{0}^{T}}
%EndExpansion%
%TCIMACRO{\dint \nolimits_{U_{1}}}%
%BeginExpansion
{\displaystyle\int\nolimits_{U_{1}}}
%EndExpansion%
%TCIMACRO{\dint \nolimits_{0}^{1}}%
%BeginExpansion
{\displaystyle\int\nolimits_{0}^{1}}
%EndExpansion
h_{x}\left(  t,x_{t}^{\left(  \mu,\xi\right)  }+\lambda\theta\left(
X_{t}^{\theta}+z_{t}\right)  ,a\right)  \mu_{t}\left(  da\right)
z_{t}d\lambda dt\nonumber\\
&  +\mathbb{E}%
%TCIMACRO{\dint \nolimits_{0}^{T}}%
%BeginExpansion
{\displaystyle\int\nolimits_{0}^{T}}
%EndExpansion
\left[
%TCIMACRO{\dint \nolimits_{U_{1}}}%
%BeginExpansion
{\displaystyle\int\nolimits_{U_{1}}}
%EndExpansion
h\left(  t,x_{t}^{\left(  \mu,\xi\right)  },a\right)  q_{t}\left(  da\right)
-%
%TCIMACRO{\dint \nolimits_{U_{1}}}%
%BeginExpansion
{\displaystyle\int\nolimits_{U_{1}}}
%EndExpansion
h\left(  t,x_{t}^{\left(  \mu,\xi\right)  },a\right)  \mu_{t}\left(
da\right)  \right]  dt\nonumber\\
&  +\mathbb{E}%
%TCIMACRO{\dint \nolimits_{0}^{T}}%
%BeginExpansion
{\displaystyle\int\nolimits_{0}^{T}}
%EndExpansion
k_{t}d\left(  \eta_{t}-\xi_{t}\right)  +\rho_{t}^{\theta},\nonumber
\end{align}
where%
\[
X_{t}^{\theta}=%
%TCIMACRO{\QDOVERD{.}{.}{x_{t}^{\left(  \mu^{\theta},\xi^{\theta}\right)
%}-x_{t}^{\left(  \mu,\xi\right)  }}{\theta}}%
%BeginExpansion
\genfrac{.}{.}{}{0}{x_{t}^{\left(  \mu^{\theta},\xi^{\theta}\right)  }%
-x_{t}^{\left(  \mu,\xi\right)  }}{\theta}%
%EndExpansion
-z_{t},
\]
and $\rho_{t}^{\theta}$ is given by%
\begin{align*}
&  \rho_{t}^{\theta}=\mathbb{E}%
%TCIMACRO{\dint \nolimits_{0}^{1}}%
%BeginExpansion
{\displaystyle\int\nolimits_{0}^{1}}
%EndExpansion
g_{x}\left(  x_{T}^{\left(  \mu,\xi\right)  }+\lambda\theta\left(
X_{T}^{\theta}+z_{T}\right)  \right)  X_{T}^{\theta}d\lambda\\
&  +\mathbb{E}%
%TCIMACRO{\dint \nolimits_{0}^{T}}%
%BeginExpansion
{\displaystyle\int\nolimits_{0}^{T}}
%EndExpansion%
%TCIMACRO{\dint \nolimits_{U_{1}}}%
%BeginExpansion
{\displaystyle\int\nolimits_{U_{1}}}
%EndExpansion%
%TCIMACRO{\dint \nolimits_{0}^{1}}%
%BeginExpansion
{\displaystyle\int\nolimits_{0}^{1}}
%EndExpansion
h_{x}\left(  t,x_{t}^{\left(  \mu,\xi\right)  }+\lambda\theta\left(
X_{t}^{\theta}+z_{t}\right)  ,a\right)  \mu_{t}\left(  da\right)
X_{t}^{\theta}d\lambda dt.
\end{align*}

By $\left(  10\right)  $, we have
\[
\underset{\theta\rightarrow0}{\lim}\mathbb{E}\left\vert X_{t}^{\theta
}\right\vert ^{2}=0.
\]

Since $g_{x}$ and $h_{x}$ are continuous and bounded, then by using the
Cauchy-Schwartz inequality we get%
\[
\underset{\theta\rightarrow0}{\lim}\rho_{t}^{\theta}=0,
\]
and by letting $\theta$ go to $0$ in $\left(  12\right)  $, the proof is completed.
\end{proof}

\subsection{Variational inequality and adjoint equation}

\ 

In this subsection, we introduce the adjoint process. With this process, we
derive the variational inequality from $\left(  11\right)  $. The linear terms
in $\left(  11\right)  $ may be treated in the following way. Let $\Phi$ be
the fundamental solution of the linear SDE
\[
\left\{
\begin{array}
[c]{ll}%
d\Phi_{t}= &
%TCIMACRO{\dint \nolimits_{U_{1}}}%
%BeginExpansion
{\displaystyle\int\nolimits_{U_{1}}}
%EndExpansion
b_{x}\left(  t,x_{t}^{\left(  \mu,\xi\right)  },a\right)  \mu_{t}\left(
da\right)  \Phi_{t}dt+%
%TCIMACRO{\dint \nolimits_{U_{1}}}%
%BeginExpansion
{\displaystyle\int\nolimits_{U_{1}}}
%EndExpansion
\sigma_{x}\left(  t,x_{t}^{\left(  \mu,\xi\right)  },a\right)  \mu_{t}\left(
da\right)  \Phi_{t}dW_{t},\\
\Phi_{0}= & I_{d}.
\end{array}
\right.
\]

This equation is linear with bounded coefficients. Hence, it admits an unique
strong solution which is invertible, and its inverse $\Psi_{t}$ is the unique
solution of%
\[
\left\{
\begin{array}
[c]{ll}%
d\Psi_{t}= & \left[
%TCIMACRO{\dint \nolimits_{U_{1}}}%
%BeginExpansion
{\displaystyle\int\nolimits_{U_{1}}}
%EndExpansion
\sigma_{x}\left(  t,x_{t}^{\left(  \mu,\xi\right)  },a\right)  \mu_{t}\left(
da\right)  \Psi_{t}%
%TCIMACRO{\dint \nolimits_{U_{1}}}%
%BeginExpansion
{\displaystyle\int\nolimits_{U_{1}}}
%EndExpansion
\sigma_{x}^{\ast}\left(  t,x_{t}^{\left(  \mu,\xi\right)  },a\right)  \mu
_{t}\left(  da\right)  \right]  dt\\
& -%
%TCIMACRO{\dint \nolimits_{U_{1}}}%
%BeginExpansion
{\displaystyle\int\nolimits_{U_{1}}}
%EndExpansion
b_{x}\left(  t,x_{t}^{\left(  \mu,\xi\right)  },a\right)  \mu_{t}\left(
da\right)  \Psi_{t}dt\\
& -%
%TCIMACRO{\dint \nolimits_{U_{1}}}%
%BeginExpansion
{\displaystyle\int\nolimits_{U_{1}}}
%EndExpansion
\sigma_{x}\left(  t,x_{t}^{\left(  \mu,\xi\right)  },a\right)  \mu_{t}\left(
da\right)  \Psi_{t}dW_{t},\\
\Psi_{0}= & I_{d}.
\end{array}
\right.
\]

Moreover, $\Phi$ and $\Psi$ satisfy
\begin{equation}
\mathbb{E}\left[  \underset{t\in\left[  0,T\right]  }{\sup}\left\vert \Phi
_{t}\right\vert ^{2}\right]  +\mathbb{E}\left[  \underset{t\in\left[
0,T\right]  }{\sup}\left\vert \Psi_{t}\right\vert ^{2}\right]  <\infty.
\end{equation}

We introduce the following processes
\begin{align}
\alpha_{t}  &  =\Psi_{t}z_{t},\\
X  &  =\Phi_{T}^{\ast}g_{x}(x_{T}^{\left(  \mu,\xi\right)  })+\int_{0}^{T}%
%TCIMACRO{\dint \nolimits_{U_{1}}}%
%BeginExpansion
{\displaystyle\int\nolimits_{U_{1}}}
%EndExpansion
\Phi_{t}^{\ast}h_{x}\left(  t,x_{t}^{\left(  \mu,\xi\right)  },a\right)
\mu_{t}\left(  da\right)  dt,\\
Y_{t}  &  =\mathbb{E}\left[  X\;/\;\mathcal{F}_{t}\right]  -\int_{0}^{t}%
%TCIMACRO{\dint \nolimits_{U_{1}}}%
%BeginExpansion
{\displaystyle\int\nolimits_{U_{1}}}
%EndExpansion
\Phi_{s}^{\ast}h_{x}\left(  s,x_{s}^{\left(  \mu,\xi\right)  },a\right)
\mu_{s}\left(  da\right)  ds.
\end{align}

We remark from $\left(  14\right)  ,\left(  15\right)  $ and $\left(
16\right)  $ that
\begin{equation}
\mathbb{E}\left[  \alpha_{T}Y_{T}\right]  =\mathbb{E}\left[  g_{x}\left(
x_{T}^{\left(  \mu,\xi\right)  }\right)  z_{T}\right]  .
\end{equation}

Since $g_{x}$ and $h_{x}$ are bounded, then by $\left(  13\right)  $, $X$ is
square integrable. Hence, the process $\left(  \mathbb{E}\left[
X\ /\ \mathcal{F}_{t}\right]  \right)  _{t\geq0}$ is a square integrable
martingale with respect to the natural filtration of the Brownian motion $W$.
Then, by It\^{o}'s representation theorem we have
\[
Y_{t}=\mathbb{E}\left[  X\right]  +\int_{0}^{t}Q_{s}dW_{s}-\int_{0}^{t}%
%TCIMACRO{\dint \nolimits_{U_{1}}}%
%BeginExpansion
{\displaystyle\int\nolimits_{U_{1}}}
%EndExpansion
\Phi_{s}^{\ast}h_{x}\left(  s,x_{s}^{\left(  \mu,\xi\right)  },a\right)
\mu_{s}\left(  da\right)  ds,
\]
where, $Q$ is an adapted process such that $\mathbb{E}%
%TCIMACRO{\dint _{0}^{T}}%
%BeginExpansion
{\displaystyle\int_{0}^{T}}
%EndExpansion
\left\vert Q_{s}\right\vert ^{2}ds<\infty.$

\ 

By applying It\^{o}'s formula to $\alpha_{t}$ then with $\alpha_{t}Y_{t}$ and
using $\left(  17\right)  $, the variational inequality $\left(  11\right)  $
becomes%
\begin{align}
0  &  \leq\mathbb{E}\int_{0}^{T}\left[  \mathcal{H}\left(  t,x_{t}^{\left(
\mu,\xi\right)  },q_{t},p_{t}^{\left(  \mu,\xi\right)  },P_{t}^{\left(
\mu,\xi\right)  }\right)  -\mathcal{H}\left(  t,x_{t}^{\left(  \mu,\xi\right)
},\mu_{t},p_{t}^{\left(  \mu,\xi\right)  },P_{t}^{\left(  \mu,\xi\right)
}\right)  \right]  dt\\
&  +\mathbb{E}\int_{0}^{T}\left(  k_{t}+G_{t}^{\ast}p_{t}\right)  d\left(
\eta_{t}-\xi_{t}\right)  ,\nonumber
\end{align}
where, the Hamiltonian $\mathcal{H}$ is defined from $\left[  0,T\right]
\times\mathbb{R}^{n}\times\mathbb{P}\left(  U_{1}\right)  \times\mathbb{R}%
^{n}\times\mathcal{M}_{n\times d}\left(  \mathbb{R}\right)  $ into $\mathbb{R}
$ by
\[
\mathcal{H}\left(  t,x,q,p,P\right)  =%
%TCIMACRO{\dint \nolimits_{U_{1}}}%
%BeginExpansion
{\displaystyle\int\nolimits_{U_{1}}}
%EndExpansion
h\left(  t,x,a\right)  q\left(  da\right)  +%
%TCIMACRO{\dint \nolimits_{U_{1}}}%
%BeginExpansion
{\displaystyle\int\nolimits_{U_{1}}}
%EndExpansion
b\left(  t,x,a\right)  q\left(  da\right)  p+%
%TCIMACRO{\dint \nolimits_{U_{1}}}%
%BeginExpansion
{\displaystyle\int\nolimits_{U_{1}}}
%EndExpansion
\sigma\left(  t,x,a\right)  q\left(  da\right)  P,
\]
$\left(  p^{\left(  \mu,\xi\right)  },P^{\left(  \mu,\xi\right)  }\right)  $
is a pair of adapted processes given by
\begin{align}
p_{t}^{\left(  \mu,\xi\right)  }  &  =\Psi_{t}^{\ast}Y_{t},\ \ p^{\left(
\mu,\xi\right)  }\in\mathcal{L}^{2}\left(  \left[  0,T\right]  ;\mathbb{R}%
^{n}\right) \\
P_{t}^{\left(  \mu,\xi\right)  }  &  =\Psi_{t}^{\ast}Q_{t}-%
%TCIMACRO{\dint \nolimits_{U_{1}}}%
%BeginExpansion
{\displaystyle\int\nolimits_{U_{1}}}
%EndExpansion
\sigma_{x}^{\ast}\left(  t,x_{t}^{\left(  \mu,\xi\right)  },a\right)  \mu
_{t}\left(  da\right)  p_{t}^{\left(  \mu,\xi\right)  },\ P^{\left(  \mu
,\xi\right)  }\in\mathcal{L}^{2}\left(  \left[  0,T\right]  ;\mathbb{R}%
^{n\times d}\right)  ,
\end{align}
and the process $Q$\ satisfies
\begin{align*}
\int_{0}^{t}Q_{s}dWs  &  =\mathbb{E}\left[  \Phi_{T}^{\ast}g_{x}%
(x_{T}^{\left(  \mu,\xi\right)  })+\int_{0}^{T}\Phi_{t}^{\ast}%
%TCIMACRO{\dint \nolimits_{U_{1}}}%
%BeginExpansion
{\displaystyle\int\nolimits_{U_{1}}}
%EndExpansion
h_{x}\left(  t,x_{t}^{\left(  \mu,\xi\right)  },a\right)  \mu_{t}\left(
da\right)  dt\;/\;\mathcal{F}_{t}\right] \\
&  -\mathbb{E}\left[  \Phi_{T}^{\ast}g_{x}(x_{T}^{\left(  \mu,\xi\right)
})+\int_{0}^{T}\Phi_{t}^{\ast}%
%TCIMACRO{\dint \nolimits_{U_{1}}}%
%BeginExpansion
{\displaystyle\int\nolimits_{U_{1}}}
%EndExpansion
h_{x}\left(  t,x_{t}^{\left(  \mu,\xi\right)  },a\right)  \mu_{t}\left(
da\right)  dt\right]  .
\end{align*}

\ 

The process $p^{\left(  \mu,\xi\right)  }$ is called the adjoint process and
from $\left(  15\right)  ,$ $\left(  16\right)  $ and $\left(  19\right)  $,
it is given explicitly by
\[
p_{t}^{\left(  \mu,\xi\right)  }=\mathbb{E}\left[  \Psi_{t}^{\ast}\Phi
_{T}^{\ast}g_{x}(x_{T}^{\left(  \mu,\xi\right)  })+\Psi_{t}^{\ast}\int_{t}^{T}%
%TCIMACRO{\dint \nolimits_{U_{1}}}%
%BeginExpansion
{\displaystyle\int\nolimits_{U_{1}}}
%EndExpansion
\Phi_{s}^{\ast}h_{x}\left(  s,x_{s}^{\left(  \mu,\xi\right)  },a\right)
\mu_{s}\left(  da\right)  ds\ /\;\mathcal{F}_{t}\right]  .
\]

By applying It\^{o}'s formula to the adjoint processes $p^{\left(  \mu
,\xi\right)  }$ in $\left(  19\right)  $, we obtain the adjoint equation,
which is a linear backward SDE, given by%
\begin{equation}
\left\{
\begin{array}
[c]{l}%
dp_{t}^{\left(  \mu,\xi\right)  }=-\mathcal{H}_{x}\left(  t,x_{t}^{\left(
\mu,\xi\right)  },\mu_{t},p_{t}^{\left(  \mu,\xi\right)  },P_{t}^{\left(
\mu,\xi\right)  }\right)  dt+P_{t}^{\left(  \mu,\xi\right)  }dW_{t},\\
p_{T}^{\left(  \mu,\xi\right)  }=g_{x}(x_{T}^{\left(  \mu,\xi\right)  }).
\end{array}
\right.
\end{equation}

\subsection{Necessary optimality conditions for relaxed-singular controls}

\ 

Starting from the variational inequality $\left(  18\right)  $, we can now
state the necessary optimality conditions, for the relaxed-singular control
problem $\left\{  \left(  5\right)  ,\left(  6\right)  ,\left(  7\right)
\right\}  $, in integral form.

\begin{theorem}
(Necessary optimality conditions for relaxed-singular controls in integral
form). \textit{Let }$\left(  \mu,\xi\right)  $\textit{\ be an optimal
relaxed-singular control minimizing the cost }$\mathcal{J}$\textit{\ over
}$\mathcal{R}$\textit{\ and }$x^{\left(  \mu,\xi\right)  }$\textit{\ denotes
the corresponding optimal trajectory. Then, there exists an unique pair of
adapted processes}
\[
\left(  p^{\left(  \mu,\xi\right)  },P^{\left(  \mu,\xi\right)  }\right)
\in\mathcal{L}^{2}\left(  \left[  0,T\right]  ;\mathbb{R}^{n}\right)
\times\mathcal{L}^{2}\left(  \left[  0,T\right]  ;\mathbb{R}^{n\times
d}\right)  ,
\]
\textit{solution of the backward SDE }$\left(  21\right)  $\textit{\ }such
that, for every $\left(  q,\eta\right)  \in\mathcal{R}$%
\begin{align}
0  &  \leq\mathbb{E}\int_{0}^{T}\left[  \mathcal{H}\left(  t,x_{t}^{\left(
\mu,\xi\right)  },q_{t},p_{t}^{\left(  \mu,\xi\right)  },P_{t}^{\left(
\mu,\xi\right)  }\right)  -\mathcal{H}\left(  t,x_{t}^{\left(  \mu,\xi\right)
},\mu_{t},p_{t}^{\left(  \mu,\xi\right)  },P_{t}^{\left(  \mu,\xi\right)
}\right)  \right]  dt\\
&  +\mathbb{E}\int_{0}^{T}\left(  k_{t}+G_{t}^{\ast}p_{t}^{\left(  \mu
,\xi\right)  }\right)  d\left(  \eta_{t}-\xi_{t}\right)  ,\nonumber
\end{align}

\end{theorem}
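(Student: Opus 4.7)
The plan is to convert the variational inequality (11) of Lemma 9, which is expressed in terms of the auxiliary process $z_t$, into the Hamiltonian form (22) by means of a duality argument, and then identify the pair $(p^{(\mu,\xi)},P^{(\mu,\xi)})$ as the unique solution of the backward SDE (21). Since the bulk of the construction is already carried out in Subsection 3.2, the theorem is essentially a clean restatement; my job is to check that each piece fits.

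First, I would introduce the fundamental solution $\Phi_t$ of the linearized state equation and its inverse $\Psi_t$, whose existence, invertibility and $L^2$-bounds follow from the boundedness of $b_x,\sigma_x$ (assumption (4)); this gives (13). I would then define $\alpha_t=\Psi_t z_t$ and the square-integrable martingale $M_t=\mathbb{E}[X\,|\,\mathcal{F}_t]$ with $X$ as in (15), setting $Y_t$ as in (16). By the Itô representation theorem applied to $M_t$, there exists a unique progressively measurable $Q$ with $\mathbb{E}\!\int_0^T|Q_s|^2ds<\infty$ such that $M_t=\mathbb{E}[X]+\int_0^tQ_sdW_s$. Defining $p_t^{(\mu,\xi)}=\Psi_t^\ast Y_t$ and $P_t^{(\mu,\xi)}$ by (20), one immediately has $p^{(\mu,\xi)}\in\mathcal{L}^2([0,T];\mathbb{R}^n)$ and $P^{(\mu,\xi)}\in\mathcal{L}^2([0,T];\mathbb{R}^{n\times d})$ using (13) and Cauchy--Schwarz.

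The computational heart of the proof is the Itô-formula calculation for $\alpha_t Y_t$. Writing out $d\alpha_t$ (from the SDE for $z$ and the SDE for $\Psi$) and $dY_t=Q_tdW_t-\int_{U_1}\Phi_t^\ast h_x(t,x_t^{(\mu,\xi)},a)\mu_t(da)dt$, the cross-variation term cancels exactly the quadratic correction in $d\Psi_t$, and after taking expectations all stochastic integrals vanish. The key identity to verify is
\begin{equation*}
\mathbb{E}[\alpha_TY_T]=\mathbb{E}\!\int_0^T\!\!\!\int_{U_1}\!\Bigl[b(t,x_t^{(\mu,\xi)},a)-b(t,x_t^{(\mu,\xi)},a)\Bigr]\!(\mu_t-q_t)(da)\,p_t^{(\mu,\xi)}dt+\cdots
\end{equation*}
— more precisely, the substitution of the $dz_t$-drift (containing the $b(\cdot,\mu)-b(\cdot,q)$ term), the $dz_t$-diffusion (containing $\sigma(\cdot,\mu)-\sigma(\cdot,q)$, which pairs with $Q$ to produce the $P$-term in the Hamiltonian), and the singular increment $G_td(\eta_t-\xi_t)$ must reproduce exactly the right-hand side of (22) when combined with (17) and the terminal identity $\mathbb{E}[\alpha_TY_T]=\mathbb{E}[g_x(x_T^{(\mu,\xi)})z_T]$. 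Plugging the resulting expression into (11) and simplifying gives (22). This Itô book-keeping, especially making sure the singular part passes through cleanly (since $\Psi^\ast_t G_t$ appears but the $\Psi^\ast$ cancels against $\Phi^\ast$ in the final pairing with $p$), is the one step where I expect to spend the most care.

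Finally, I would derive (21) by applying Itô's formula to $p_t^{(\mu,\xi)}=\Psi_t^\ast Y_t$: the drift collects into $-\mathcal{H}_x(t,x_t^{(\mu,\xi)},\mu_t,p_t^{(\mu,\xi)},P_t^{(\mu,\xi)})$ thanks to the definition of $\mathcal{H}$ and (20), while the diffusion coefficient becomes $P_t^{(\mu,\xi)}$; the terminal condition $p_T^{(\mu,\xi)}=g_x(x_T^{(\mu,\xi)})$ follows from $\Psi_T^\ast Y_T=\Psi_T^\ast\Phi_T^\ast g_x(x_T^{(\mu,\xi)})=g_x(x_T^{(\mu,\xi)})$. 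Uniqueness of $(p^{(\mu,\xi)},P^{(\mu,\xi)})$ is the standard one for linear backward SDEs with bounded coefficients and square-integrable terminal data, so it follows from classical results and the estimate (13). Combining the variational inequality (18) derived above with this identification yields (22), completing the proof.
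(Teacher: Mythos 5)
Your proposal follows the paper's own route exactly: the paper's proof of this theorem is simply that the inequality is the variational inequality $(18)$, which is obtained in Subsection 3.2 precisely by the construction you describe (fundamental solution $\Phi$ and inverse $\Psi$, It\^{o} representation for $Y$, It\^{o}'s formula applied to $\alpha_{t}Y_{t}$ combined with Lemma 9, and It\^{o}'s formula on $p_{t}=\Psi_{t}^{\ast}Y_{t}$ to get the backward SDE $(21)$). Aside from the garbled intermediate display (the bracket $b-b$ is vacuous as written, though your surrounding prose makes the intended substitution clear), the argument is correct and essentially identical to the paper's.
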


\begin{proof}
The result follows immediately from $\left(  18\right)  $.
\end{proof}

We are ready now state necessary optimality conditions for the
relaxed-singular control problem $\left\{  \left(  5\right)  ,\left(
6\right)  ,\left(  7\right)  \right\}  $, in global form.

\begin{theorem}
(Necessary optimality conditions for relaxed-singular controls in global
form). \textit{Let }$\left(  \mu,\xi\right)  $\textit{\ be an optimal
relaxed-singular control minimizing the cost }$\mathcal{J}$\textit{\ over
}$\mathcal{R}$\textit{\ and }$x^{\left(  \mu,\xi\right)  }$\textit{\ denotes
the corresponding optimal trajectory. Then, there exists an unique pair of
adapted processes}
\[
\left(  p^{\left(  \mu,\xi\right)  },P^{\left(  \mu,\xi\right)  }\right)
\in\mathcal{L}^{2}\left(  \left[  0,T\right]  ;\mathbb{R}^{n}\right)
\times\mathcal{L}^{2}\left(  \left[  0,T\right]  ;\mathbb{R}^{n\times
d}\right)  ,
\]
\textit{solution of the backward SDE }$\left(  21\right)  $\textit{\ }such
that%
\begin{equation}
\mathcal{H}\left(  t,x_{t}^{\left(  \mu,\xi\right)  },\mu_{t},p_{t}^{\left(
\mu,\xi\right)  },P_{t}^{\left(  \mu,\xi\right)  }\right)  =\underset{q_{t}%
\in\mathbb{P}\left(  U_{1}\right)  }{\inf}\mathcal{H}\left(  t,x_{t}^{\left(
\mu,\xi\right)  },q_{t},p_{t}^{\left(  \mu,\xi\right)  },P_{t}^{\left(
\mu,\xi\right)  }\right)  ,\ a.e,\ a.s,
\end{equation}%
\begin{equation}
\mathcal{P}\left\{  \forall t\in\left[  0,T\right]  ,\;\forall i\;;\;\left(
k_{_{i}}\left(  t\right)  +G_{i}^{\ast}\left(  t\right)  .p_{t}^{\left(
\mu,\xi\right)  }\right)  \geq0\right\}  =1,
\end{equation}%
\begin{equation}
\mathcal{P}\left\{  \sum_{i=1}^{d}\mathbf{1}_{\left\{  k_{_{i}}\left(
t\right)  +G_{i}^{\ast}\left(  t\right)  p_{t}^{\left(  \mu,\xi\right)  }%
\geq0\right\}  }d\xi_{t}^{i}=0\right\}  =1.
\end{equation}

\end{theorem}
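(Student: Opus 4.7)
The plan is to derive the three pointwise conditions from the integral variational inequality of Theorem 5 by decoupling the absolutely continuous and singular perturbations through specific choices of the competing control $(q,\eta)$. Observe that the inequality (22) in Theorem 5 splits into a ``Hamiltonian part'' depending on $q$ and $\mu$ and a ``singular part'' depending on $\eta$ and $\xi$. By first fixing $\eta=\xi$ (which kills the singular term) and letting $q$ vary, we obtain information about the Hamiltonian; by next fixing $q=\mu$ (which kills the Hamiltonian term) and letting $\eta$ vary, we obtain information about $k_t+G_t^{\ast}p_t^{(\mu,\xi)}$.

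To prove (22), I would fix any $\mathbb{P}(U_1)$-valued admissible process $\bar q$ and any progressively measurable set $A\subset[0,T]\times\Omega$, and form the admissible relaxed control $q_t(\omega)=\bar q_t(\omega)\mathbf{1}_A(t,\omega)+\mu_t(\omega)\mathbf{1}_{A^c}(t,\omega)$, taking $\eta=\xi$. Plugging $(q,\xi)$ into Theorem 5 gives
\[
\mathbb{E}\int\!\!\int_{A}\bigl[\mathcal{H}(t,x_t^{(\mu,\xi)},\bar q_t,p_t^{(\mu,\xi)},P_t^{(\mu,\xi)})-\mathcal{H}(t,x_t^{(\mu,\xi)},\mu_t,p_t^{(\mu,\xi)},P_t^{(\mu,\xi)})\bigr]\,dt\,\ge 0.
\]
Since $A$ is arbitrary, the integrand is nonnegative $dt\otimes d\mathcal{P}$-a.e. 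A standard separability argument on $\mathbb{P}(U_1)$ (using a countable dense family of deterministic measures, then a measurable selection to recover the supremum over all $\mathbb{P}(U_1)$-valued processes) yields the pointwise minimization (22).

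To prove (23) and (24), I would now fix $q=\mu$, so that the Hamiltonian contribution in (22) of Theorem 5 vanishes, leaving
\[
\mathbb{E}\int_{0}^{T}\bigl(k_t+G_t^{\ast}p_t^{(\mu,\xi)}\bigr)\,d(\eta_t-\xi_t)\ge 0\qquad\forall\,\eta\in\mathcal{U}_2.
\]
For (23), choose $\eta=\xi+\delta$, where $\delta$ is any nondecreasing, left-continuous, $(\mathcal{F}_t)$-adapted process in $\mathcal{U}_2$ starting at $0$; taking $\delta$ concentrated in a small $(\mathcal{F}_t)$-predictable set around an arbitrary point shows componentwise that $k_i(t)+G_i^{\ast}(t)p_t^{(\mu,\xi)}\ge 0$ almost everywhere, proving (23). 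For (24), choose the admissible perturbation $\eta=(1-\varepsilon)\xi$ for small $\varepsilon\in(0,1)$, which is still in $\mathcal{U}_2$; substituting gives $-\varepsilon\,\mathbb{E}\int_{0}^{T}(k_t+G_t^{\ast}p_t^{(\mu,\xi)})\,d\xi_t\ge 0$, and combined with the sign from (23) this forces $\int_0^T\sum_i(k_i(t)+G_i^{\ast}(t)p_t^{(\mu,\xi)})\,d\xi_t^i=0$ almost surely, which is equivalent to the complementarity statement (24).

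The main obstacle is the passage from the integrated Hamiltonian inequality to the pointwise minimization (22): one must verify that the class of ``needle'' relaxed controls $\bar q\mathbf{1}_A+\mu\mathbf{1}_{A^c}$ is rich enough to separate the measure $\mathbb{P}(U_1)$-valued function $(t,\omega)\mapsto\mu_t(\omega)$ from any other admissible relaxed process, which requires a measurable selection theorem together with the metrizability and separability of $\mathbb{P}(U_1)$ under stable convergence. The singular complementarity piece, by contrast, is a direct cone argument on the variational inequality.
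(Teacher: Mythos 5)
Your proposal is correct and follows essentially the same route as the paper: specialize the integral variational inequality of the preceding theorem with $\eta=\xi$ to isolate the Hamiltonian term (then localize over measurable sets to get the pointwise minimization), and with $q=\mu$ to isolate the singular term, deriving the sign and complementarity conditions by the cone perturbations $\eta=\xi+\delta$ and $\eta=(1-\varepsilon)\xi$. The paper's own proof is just terser, delegating exactly these standard localization and cone arguments to Cadenillas--Haussmann and Bahlali--Djehiche--Mezerdi, which you have instead written out.
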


\begin{proof}
Let $\left(  \mu,\xi\right)  $ be an optimal solution of problem $\left\{
\left(  5\right)  ,\left(  6\right)  ,\left(  7\right)  \right\}  $. The
necessary optimality conditions in integral form $\left(  22\right)  $ is
valid for every $\left(  q,\eta\right)  \in\mathcal{R}$. If we put in $\left(
22\right)  $ $\eta=\xi$, then $\left(  23\right)  $ becomes immediately. On
the other hand, if we choose in $\left(  22\right)  $ $q=\mu$, then we can
show $\left(  24\right)  $ and $\left(  25\right)  $, by the same method that
in $\left[  8,\ \text{Theorem 4.2}\right]  $ or $\left[  4,\text{\ Theorem
3.7}\right]  $.
\end{proof}

\subsection{Sufficient optimality conditions for relaxed-singular controls}

\ 

\begin{theorem}
(Sufficient optimality conditions for relaxed-singular controls). Assume that
the functions $g$\ and $x\longmapsto\mathcal{H}\left(  t,x,q,p,P\right)
$\ are convex. Then, $\left(  \mu,\xi\right)  $\ is an optimal solution of
problem $\left\{  \left(  5\right)  ,\left(  6\right)  ,\left(  7\right)
\right\}  $, if it satisfies $\left(  23\right)  ,\ \left(  24\right)  $ and
$\left(  25\right)  .$
\end{theorem}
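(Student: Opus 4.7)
The plan is to show $\mathcal{J}(q,\eta)\geq\mathcal{J}(\mu,\xi)$ for every $(q,\eta)\in\mathcal{R}$ by a direct, Bensoussan-style computation of the cost difference, using the adjoint equation $(21)$ and the two convexity hypotheses. Write $\tilde{x}_t=x_t^{(q,\eta)}-x_t^{(\mu,\xi)}$, so that $\tilde x_0=0$. Convexity of $g$ together with the terminal condition $p_T^{(\mu,\xi)}=g_x(x_T^{(\mu,\xi)})$ gives $\mathbb{E}[g(x_T^{(q,\eta)})-g(x_T^{(\mu,\xi)})]\geq\mathbb{E}[(p_T^{(\mu,\xi)})^{\ast}\tilde x_T]$.

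Next I would apply It\^o's formula to $(p_t^{(\mu,\xi)})^{\ast}\tilde x_t$ on $[0,T]$. The singular drift $G_t\,d(\eta_t-\xi_t)$ in $\tilde x$ is of bounded variation, so its cross-variation with $p^{(\mu,\xi)}$ vanishes. Substituting the adjoint dynamics $(21)$ and collecting the resulting terms via the identity $\mathcal{H}=\overline h+p^{\ast}\overline b+P\cdot\overline\sigma$, then adding the $\overline h$ and $k$ integrals from $\mathcal{J}(q,\eta)-\mathcal{J}(\mu,\xi)$ on both sides of the inequality above, I arrive at
\[
\mathcal{J}(q,\eta)-\mathcal{J}(\mu,\xi)\;\geq\;\mathbb{E}\!\int_0^T\! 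B_t\,dt\;+\;\mathbb{E}\!\int_0^T\!\bigl(k_t+G_t^{\ast}p_t^{(\mu,\xi)}\bigr)^{\ast}d(\eta_t-\xi_t),
\]
where $B_t=\mathcal{H}(t,x_t^{(q,\eta)},q_t,p_t^{(\mu,\xi)},P_t^{(\mu,\xi)})-\mathcal{H}(t,x_t^{(\mu,\xi)},\mu_t,p_t^{(\mu,\xi)},P_t^{(\mu,\xi)})-\mathcal{H}_x(t,x_t^{(\mu,\xi)},\mu_t,p_t^{(\mu,\xi)},P_t^{(\mu,\xi)})^{\ast}\tilde x_t$.

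The singular integral is non-negative: by $(24)$ the integrand $k_t+G_t^{\ast}p_t^{(\mu,\xi)}$ is coordinate-wise $\geq 0$, so its integral against the increasing process $\eta$ is non-negative, while $(25)$ (read as the complementary-slackness condition) forces its integral against $\xi$ to vanish. For the Hamiltonian integrand $B_t$ I would use the splitting
\[
\mathcal{H}(q_t,x_t^{(q,\eta)})-\mathcal{H}(\mu_t,x_t^{(\mu,\xi)})=\bigl[\mathcal{H}(q_t,x_t^{(q,\eta)})-\mathcal{H}(q_t,x_t^{(\mu,\xi)})\bigr]+\bigl[\mathcal{H}(q_t,x_t^{(\mu,\xi)})-\mathcal{H}(\mu_t,x_t^{(\mu,\xi)})\bigr],
\]
where the second bracket is pointwise $\geq 0$ by $(23)$ and convexity of $x\mapsto\mathcal{H}(t,x,q_t,p,P)$ bounds the first bracket below by $\mathcal{H}_x(t,x_t^{(\mu,\xi)},q_t,p_t^{(\mu,\xi)},P_t^{(\mu,\xi)})^{\ast}\tilde x_t$. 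The main obstacle will be to discharge the residual $[\mathcal{H}_x(q_t,x_t^{(\mu,\xi)})-\mathcal{H}_x(\mu_t,x_t^{(\mu,\xi)})]^{\ast}\tilde x_t$, which has no obvious pointwise sign; as in the analogous argument of $[5]$, I would exploit that both $\mathcal{H}$ and $\mathcal{H}_x$ are affine in the relaxed variable and that $\mu_t$ pointwise minimizes $q\mapsto\mathcal{H}(t,x_t^{(\mu,\xi)},q,p_t^{(\mu,\xi)},P_t^{(\mu,\xi)})$, so that the residual integrates to a non-negative quantity in expectation. This yields $\mathbb{E}\!\int_0^T B_t\,dt\geq 0$, and the optimality of $(\mu,\xi)$ follows.
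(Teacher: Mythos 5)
Your overall architecture coincides with the paper's: convexity of $g$ to pass to $p_T^{\left(\mu,\xi\right)}\left(x_T^{\left(q,\eta\right)}-x_T^{\left(\mu,\xi\right)}\right)$, It\^o's formula applied to $p_t^{\left(\mu,\xi\right)}\left(x_t^{\left(q,\eta\right)}-x_t^{\left(\mu,\xi\right)}\right)$ to reach an inequality whose integrand is your $B_t$ plus the singular term, and the treatment of $\mathbb{E}\int_0^T\left(k_t+G_t^{\ast}p_t^{\left(\mu,\xi\right)}\right)d\left(\eta_t-\xi_t\right)\geq0$ via $\left(24\right)$ and $\left(25\right)$ is exactly the paper's (your reading of $\left(25\right)$ as a complementary-slackness condition is the intended one). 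Up to the paper's inequality $\left(26\right)$ you are on the same track.

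The gap is at the last step, which you have correctly located but not closed. Your splitting bounds the first bracket below by $\mathcal{H}_x\left(t,x_t^{\left(\mu,\xi\right)},q_t,p_t^{\left(\mu,\xi\right)},P_t^{\left(\mu,\xi\right)}\right)\tilde x_t$, leaving the residual $\left[\mathcal{H}_x\left(t,x_t^{\left(\mu,\xi\right)},q_t,\cdot\right)-\mathcal{H}_x\left(t,x_t^{\left(\mu,\xi\right)},\mu_t,\cdot\right)\right]\tilde x_t$. Your proposed resolution --- that affineness of $\mathcal{H}$ and $\mathcal{H}_x$ in the relaxed variable together with the pointwise minimality $\left(23\right)$ force this residual to integrate to something non-negative --- is not substantiated and is not true in general: condition $\left(23\right)$ constrains the values of $\mathcal{H}$, not of its $x$-gradient, and $\tilde x_t$ has no sign, so the residual can have either sign with no cancellation mechanism in expectation. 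The paper avoids ever introducing $\mathcal{H}_x\left(\cdot,q_t,\cdot\right)$: it asserts the single subgradient inequality $\mathcal{H}\left(t,x_t^{\left(q,\eta\right)},q_t,\cdot\right)-\mathcal{H}\left(t,x_t^{\left(\mu,\xi\right)},\mu_t,\cdot\right)\geq\mathcal{H}_x\left(t,x_t^{\left(\mu,\xi\right)},\mu_t,\cdot\right)\left(x_t^{\left(q,\eta\right)}-x_t^{\left(\mu,\xi\right)}\right)$, justified by the Clarke generalized gradient of $\mathcal{H}$ evaluated at $\left(x_t^{\left(\mu,\xi\right)},\mu_t\right)$ together with $\left(23\right)$ --- the Cadenillas--Haussmann, Arrow-type argument that $\mathcal{H}_x$ at the minimizing measure is a generalized subgradient of the minimized Hamiltonian $x\longmapsto\inf_{q}\mathcal{H}\left(t,x,q,p,P\right)$. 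That is the lemma your argument needs; without it (or the stronger hypothesis that $\mathcal{H}$ is jointly convex in $\left(x,q\right)$), the proof does not close.
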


\begin{proof}
We know that the set of relaxed-singular controls $\mathcal{R}$\ is convex and
the Hamiltonian $\mathcal{H}$\ is linear in $q$.

Let $\left(  \mu,\xi\right)  $\ be an arbitrary element of $\mathcal{R}%
$\ (candidate to be optimal). For any $\left(  q,\eta\right)  \in\mathcal{R}$,
we have%
\begin{align*}
\mathcal{J}\left(  \mu,\xi\right)  -\mathcal{J}\left(  q,\eta\right)   &
=\mathbb{E}\left[  g\left(  x_{T}^{\left(  \mu,\xi\right)  }\right)  -g\left(
x_{T}^{\left(  q,\eta\right)  }\right)  \right] \\
&  +\mathbb{E}%
%TCIMACRO{\dint \nolimits_{0}^{T}}%
%BeginExpansion
{\displaystyle\int\nolimits_{0}^{T}}
%EndExpansion
\left[
%TCIMACRO{\dint \nolimits_{U_{1}}}%
%BeginExpansion
{\displaystyle\int\nolimits_{U_{1}}}
%EndExpansion
h\left(  t,x_{t}^{\left(  \mu,\xi\right)  },a\right)  \mu_{t}\left(
da\right)  -%
%TCIMACRO{\dint \nolimits_{U_{1}}}%
%BeginExpansion
{\displaystyle\int\nolimits_{U_{1}}}
%EndExpansion
h\left(  t,x_{t}^{\left(  q,\eta\right)  },a\right)  q_{t}\left(  da\right)
\right]  dt\\
&  +\mathbb{E}%
%TCIMACRO{\dint \nolimits_{0}^{T}}%
%BeginExpansion
{\displaystyle\int\nolimits_{0}^{T}}
%EndExpansion
k_{t}d\left(  \xi_{t}-\eta_{t}\right)  .
\end{align*}

Since $g$\ is convex, we get%
\[
g\left(  x_{T}^{\left(  q,\eta\right)  }\right)  -g\left(  x_{T}^{\left(
\mu,\xi\right)  }\right)  \geq g_{x}\left(  x_{T}^{\left(  \mu,\xi\right)
}\right)  \left(  x_{T}^{\left(  q,\eta\right)  }-x_{T}^{\left(  \mu
,\xi\right)  }\right)  .
\]

Thus,%
\[
g\left(  x_{T}^{\left(  \mu,\xi\right)  }\right)  -g\left(  x_{T}^{\left(
q,\eta\right)  }\right)  \leq g_{x}\left(  x_{T}^{\left(  \mu,\xi\right)
}\right)  \left(  x_{T}^{\left(  \mu,\xi\right)  }-x_{T}^{\left(
q,\eta\right)  }\right)  .
\]

Hence,%
\begin{align*}
\mathcal{J}\left(  \mu,\xi\right)  -\mathcal{J}\left(  q,\eta\right)   &
\leq\mathbb{E}\left[  g_{x}\left(  x_{T}^{\left(  \mu,\xi\right)  }\right)
\left(  x_{T}^{\left(  \mu,\xi\right)  }-x_{T}^{\left(  q,\eta\right)
}\right)  \right] \\
&  +\mathbb{E}%
%TCIMACRO{\dint \nolimits_{0}^{T}}%
%BeginExpansion
{\displaystyle\int\nolimits_{0}^{T}}
%EndExpansion
\left[
%TCIMACRO{\dint \nolimits_{U_{1}}}%
%BeginExpansion
{\displaystyle\int\nolimits_{U_{1}}}
%EndExpansion
h\left(  t,x_{t}^{\left(  \mu,\xi\right)  },a\right)  \mu_{t}\left(
da\right)  -%
%TCIMACRO{\dint \nolimits_{U_{1}}}%
%BeginExpansion
{\displaystyle\int\nolimits_{U_{1}}}
%EndExpansion
h\left(  t,x_{t}^{\left(  q,\eta\right)  },a\right)  q_{t}\left(  da\right)
\right]  dt\\
&  +\mathbb{E}%
%TCIMACRO{\dint \nolimits_{0}^{T}}%
%BeginExpansion
{\displaystyle\int\nolimits_{0}^{T}}
%EndExpansion
k_{t}d\left(  \xi_{t}-\eta_{t}\right)  .
\end{align*}

We remark that $p_{T}^{\left(  \mu,\xi\right)  }=g_{x}\left(  x_{T}^{\left(
\mu,\xi\right)  }\right)  $, then we have
\begin{align*}
\mathcal{J}\left(  \mu,\xi\right)  -\mathcal{J}\left(  q,\eta\right)   &
\leq\mathbb{E}\left[  p_{T}^{\left(  \mu,\xi\right)  }\left(  x_{T}^{\left(
\mu,\xi\right)  }-x_{T}^{\left(  q,\eta\right)  }\right)  \right] \\
&  +\mathbb{E}%
%TCIMACRO{\dint \nolimits_{0}^{T}}%
%BeginExpansion
{\displaystyle\int\nolimits_{0}^{T}}
%EndExpansion
\left[
%TCIMACRO{\dint \nolimits_{U_{1}}}%
%BeginExpansion
{\displaystyle\int\nolimits_{U_{1}}}
%EndExpansion
h\left(  t,x_{t}^{\left(  \mu,\xi\right)  },a\right)  \mu_{t}\left(
da\right)  -%
%TCIMACRO{\dint \nolimits_{U_{1}}}%
%BeginExpansion
{\displaystyle\int\nolimits_{U_{1}}}
%EndExpansion
h\left(  t,x_{t}^{\left(  q,\eta\right)  },a\right)  q_{t}\left(  da\right)
\right]  dt\\
&  +\mathbb{E}%
%TCIMACRO{\dint \nolimits_{0}^{T}}%
%BeginExpansion
{\displaystyle\int\nolimits_{0}^{T}}
%EndExpansion
k_{t}d\left(  \xi_{t}-\eta_{t}\right)  .
\end{align*}

Applying It\^{o}'s formula to $p_{t}^{\left(  \mu,\xi\right)  }\left(
x_{t}^{\left(  \mu,\xi\right)  }-x_{t}^{\left(  q,\eta\right)  }\right)
$\ and taking expectation, we obtain%
\begin{align}
&  \mathcal{J}\left(  \mu,\xi\right)  -\mathcal{J}\left(  q,\eta\right) \\
&  \leq\mathbb{E}%
%TCIMACRO{\dint \nolimits_{0}^{T}}%
%BeginExpansion
{\displaystyle\int\nolimits_{0}^{T}}
%EndExpansion
\left[  \mathcal{H}\left(  t,x_{t}^{\left(  \mu,\xi\right)  },\mu_{t}%
,p_{t}^{\left(  \mu,\xi\right)  },P_{t}^{\left(  \mu,\xi\right)  }\right)
-\mathcal{H}\left(  t,x_{t}^{\left(  q,\eta\right)  },q_{t},p_{t}^{\left(
\mu,\xi\right)  },P_{t}^{\left(  \mu,\xi\right)  }\right)  \right]
dt\nonumber\\
&  -\mathbb{E}%
%TCIMACRO{\dint \nolimits_{0}^{T}}%
%BeginExpansion
{\displaystyle\int\nolimits_{0}^{T}}
%EndExpansion
\mathcal{H}_{x}\left(  t,x_{t}^{\left(  \mu,\xi\right)  },\mu_{t}%
,p_{t}^{\left(  \mu,\xi\right)  },P_{t}^{\left(  \mu,\xi\right)  }\right)
\left(  x_{t}^{\left(  \mu,\xi\right)  }-x_{t}^{\left(  q,\eta\right)
}\right)  dt\nonumber\\
&  +\mathbb{E}%
%TCIMACRO{\dint \nolimits_{0}^{T}}%
%BeginExpansion
{\displaystyle\int\nolimits_{0}^{T}}
%EndExpansion
\left(  k_{t}+G_{t}^{\ast}p_{t}^{\left(  \mu,\xi\right)  }\right)  d\left(
\xi_{t}-\eta_{t}\right)  .\nonumber
\end{align}

Since $\mathcal{H}$\ is convex in $x$\ and linear in $\mu$, then by using the
Clarke generalized gradient of $\mathcal{H}$\ evaluated at $\left(  x_{t}%
,\mu_{t}\right)  $\ and $\left(  23\right)  $, it follows that
\begin{align*}
&  \mathcal{H}\left(  t,x_{t}^{\left(  q,\eta\right)  },q_{t},p_{t}^{\left(
\mu,\xi\right)  },P_{t}^{\left(  \mu,\xi\right)  }\right)  -\mathcal{H}\left(
t,x_{t}^{\left(  \mu,\xi\right)  },\mu_{t},p_{t}^{\left(  \mu,\xi\right)
},P_{t}^{\left(  \mu,\xi\right)  }\right) \\
&  \geq\mathcal{H}_{x}\left(  t,x_{t}^{\left(  \mu,\xi\right)  },\mu_{t}%
,p_{t}^{\left(  \mu,\xi\right)  },P_{t}^{\left(  \mu,\xi\right)  }\right)
\left(  x_{t}^{\left(  q,\eta\right)  }-x_{t}^{\left(  \mu,\xi\right)
}\right)  .
\end{align*}

Or equivalently%
\begin{align*}
0  &  \geq\mathcal{H}\left(  t,x_{t}^{\left(  \mu,\xi\right)  },\mu_{t}%
,p_{t}^{\left(  \mu,\xi\right)  },P_{t}^{\left(  \mu,\xi\right)  }\right)
-\mathcal{H}\left(  t,x_{t}^{\left(  q,\eta\right)  },q_{t},p_{t}^{\left(
\mu,\xi\right)  },P_{t}^{\left(  \mu,\xi\right)  }\right) \\
&  -\mathcal{H}_{x}\left(  t,x_{t}^{\left(  \mu,\xi\right)  },\mu_{t}%
,p_{t}^{\left(  \mu,\xi\right)  },P_{t}^{\left(  \mu,\xi\right)  }\right)
\left(  x_{t}^{\left(  \mu,\xi\right)  }-x_{t}^{\left(  q,\eta\right)
}\right)  .
\end{align*}

By the above inequality and $\left(  26\right)  $, we have%
\[
\mathcal{J}\left(  \mu,\xi\right)  -\mathcal{J}\left(  q,\eta\right)
\leq\mathbb{E}%
%TCIMACRO{\dint \nolimits_{0}^{T}}%
%BeginExpansion
{\displaystyle\int\nolimits_{0}^{T}}
%EndExpansion
\left(  k_{t}+G_{t}^{\ast}p_{t}^{\left(  \mu,\xi\right)  }\right)  d\left(
\xi_{t}-\eta_{t}\right)  .
\]

By $\left(  24\right)  $ and $\left(  25\right)  $, we show that
\[
\mathbb{E}%
%TCIMACRO{\dint \nolimits_{0}^{T}}%
%BeginExpansion
{\displaystyle\int\nolimits_{0}^{T}}
%EndExpansion
\left(  k_{t}+G_{t}^{\ast}p_{t}^{\left(  \mu,\xi\right)  }\right)  d\left(
\eta_{t}-\xi_{t}\right)  \geq0.
\]

Then, we get%
\[
\mathcal{J}\left(  \mu,\xi\right)  -\mathcal{J}\left(  q,\eta\right)  \leq0.
\]

The theorem is proved.
\end{proof}

\section{Optimality conditions for strict-singular controls}

\ 

In this section, we study the strict-singular control problem $\left\{
\left(  1\right)  ,\left(  2\right)  ,\left(  3\right)  \right\}  $ and from
the results of section 3, we derive the optimality conditions for
strict-singular controls.

\ 

Throughout this section and in addition to the assumptions $\left(  4\right)
$, we suppose that
\begin{align}
&  U_{1}\text{ is compact,}\\
&  b,\ \sigma\text{ and }h\text{ are\ bounded.}%
\end{align}

Consider the following subset of $\mathcal{R}$%
\[
\delta\left(  \mathcal{U}_{1}\right)  \times\mathcal{U}_{2}=\left\{  \left(
q,\eta\right)  \in\mathcal{R}\text{ \ /\ \ }q=\delta_{v}\ \ ;\ \ v\in
\mathcal{U}_{1}\right\}  .
\]

Denote by $\delta\left(  U_{1}\right)  \times U_{2}$ the action set of all
relaxed-singular controls in $\delta\left(  \mathcal{U}_{1}\right)
\times\mathcal{U}_{2}$.

If $\left(  q,\eta\right)  \in\delta\left(  \mathcal{U}_{1}\right)
\times\mathcal{U}_{2}$, then $\left(  q,\eta\right)  =\left(  \delta_{v}%
,\eta\right)  $ with $v\in\mathcal{U}_{1}$. In this case we have for each $t$,
$\left(  q_{t},\eta_{t}\right)  =\left(  \delta_{v_{t}},\eta_{t}\right)
\in\delta\left(  U_{1}\right)  \times U_{2}$.

\ 

We equipped $\mathbb{P}\left(  U_{1}\right)  $ with the topology of stable
convergence. Since $U_{1}$ is compact, then with this topology $\mathbb{P}%
\left(  U_{1}\right)  $\ is a compact metrizable space. The stable convergence
is required for bounded measurable functions $f\left(  t,a\right)  $\ such
that for each fixed $t\in\left[  0,T\right]  $, $f\left(  t,.\right)  $\ is
continuous (Instead of functions bounded and continuous with respect to the
pair $\left(  t,a\right)  $ for the weak topology). The space $\mathbb{P}%
\left(  U_{1}\right)  $\ is equipped with its Borel $\sigma$-field, which is
the smallest $\sigma$-field such that the mapping $q\longmapsto%
%TCIMACRO{\dint }%
%BeginExpansion
{\displaystyle\int}
%EndExpansion
f\left(  s,a\right)  q\left(  ds,da\right)  $\ are measurable for any bounded
measurable function $f$, continuous with respect to $a.\ $For more details,
see Jacod and Memin $\left[  20\right]  $ and El Karoui et al $\left[
15\right]  $.

This allows us to summarize some of lemmas that we will be used in the sequel.

\ 

\begin{lemma}
(Chattering\thinspace\thinspace Lemma).\thinspace\textit{Let }$q$\textit{\ be
a predictable process with values in the space of probability measures on
}$U_{1}$\textit{. Then there exists a sequence of predictable processes
}$\left(  u^{n}\right)  _{n}$\textit{\ with values in }$U_{1}$\textit{\ such
that }%
\begin{equation}
dtq_{t}^{n}\left(  da\right)  =dt\delta_{u_{t}^{n}}\left(  da\right)
\underset{n\rightarrow\infty}{\longrightarrow}dtq_{t}\left(  da\right)  \text{
stably},\text{\textit{\ \ }}\mathcal{P}-a.s.
\end{equation}
where $\delta_{u_{t}^{n}}$ is the Dirac measure concentrated at a single point
$u_{t}^{n}$ of $U_{1}$.
\end{lemma}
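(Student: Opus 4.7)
The plan is a two-stage approximation of $q$: first I would discretize in time to obtain a $\mathbb{P}(U_{1})$-valued predictable process which is piecewise constant in $t$, and then I would replace each piecewise-constant slice by an empirical average of Dirac masses supported on suitably chosen predictable $U_{1}$-valued selections. Compactness of $U_{1}$ (hence metrizable compactness of $\mathbb{P}(U_{1})$ for the stable topology) together with separability of $C(U_{1})$ lets me run a diagonal $\omega$-wise argument over a countable family of test functions, which is precisely what is needed for stable convergence of $dt\,\delta_{u_t^n}(da)$ towards $dt\,q_t(da)$, $\mathcal{P}$-a.s.

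For the time discretization, I set
\[
\tilde q_t^n := \frac{2^n}{T}\int_{t_{k-1}^n}^{t_k^n} q_s\, ds\quad\text{for } t\in(t_{k-1}^n,t_k^n],
\]
with $t_k^n := kT/2^n$. After the usual left-shift this is predictable and $\mathbb{P}(U_{1})$-valued. For each $f\in C(U_{1})$, Lebesgue differentiation applied $\omega$-wise to the finite-variation map $t\mapsto\int_0^t\langle q_s,f\rangle\,ds$ yields $\langle\tilde q_t^n,f\rangle\to\langle q_t,f\rangle$ for a.e.\ $t$, $\mathcal{P}$-a.s.; since a countable dense family of such $f$ determines stable convergence on the compact set $U_{1}$, I conclude $dt\,\tilde q_t^n(da)\to dt\,q_t(da)$ stably a.s.

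For the second stage, I further split each $(t_{k-1}^n,t_k^n]$ into $2^n$ equal sub-subintervals $J_{k,j}^n$. Writing $\nu_k^n := \tilde q_{t_k^n}^n$, I apply the Kuratowski--Ryll-Nardzewski measurable selection theorem to pick predictable $U_{1}$-valued random variables $a_{k,j}^n(\omega)$ whose empirical measure $2^{-n}\sum_{j=1}^{2^n}\delta_{a_{k,j}^n}$ converges weakly to $\nu_k^n$, $\mathcal{P}$-a.s.; this is possible because any probability measure on the compact metric space $U_{1}$ is the weak limit of uniform atomic measures on $2^n$ atoms. Setting $u_t^n := a_{k,j}^n$ on $J_{k,j}^n$ and invoking the uniform continuity in $t$ of bounded Carath\'eodory test functions, a diagonal extraction combines the two stages into a single sequence with the claimed convergence.

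The principal technical obstacle is guaranteeing joint measurability and predictability of the selections $a_{k,j}^n$: a na\"ive pointwise construction produces a $u^n$ that is only $\mathcal{B}([0,T])\otimes\mathcal{F}$-measurable, whereas it must be $(\mathcal{F}_t)$-predictable to qualify as a strict control. I would handle this by applying the measurable selection theorem directly to the $\mathbb{P}(U_{1})$-valued random variable $\nu_k^n$ jointly with a measurable parameterization of the atoms of its $2^n$-point approximation, following the construction carried out in detail in El Karoui et al.\ $[15]$ and Jacod--M\'emin $[20]$.
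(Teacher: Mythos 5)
The paper does not actually prove this lemma: its ``proof'' is the single line ``See El Karoui et al.\ $[15]$'', so your attempt is already more than the paper supplies. In substance, your two-stage scheme (dyadic time-averaging of $q$, then replacement of each averaged measure by an atomic measure spread over sub-subintervals) is the classical chattering construction of Fleming and of El Karoui et al., and the skeleton is sound; the Lebesgue-differentiation step and the reduction to a countable family of test functions in $a$ are exactly right.

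Two points need repair. First, your appeal to ``uniform continuity in $t$ of bounded Carath\'eodory test functions'' is not available: stable convergence is tested against functions that are merely \emph{measurable} in $t$ and continuous in $a$, so no regularity in $t$ can be invoked. The correct reduction is to $f(t,a)=\mathbf{1}_{[0,s]}(t)h(a)$ with $h$ in a countable dense subset of $C(U_{1})$; these determine stable convergence here because all the measures involved share the first marginal $dt$, so indicators of arbitrary Borel sets can be approximated in $L^{1}(dt)$ uniformly in $n$. For such $f$ the computation reduces to comparing $2^{-n}\sum_{j}h(a^{n}_{k,j})$ with $\langle\nu^{n}_{k},h\rangle$, which is what your second stage provides. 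Second, ``the empirical measure converges weakly to $\nu^{n}_{k}$'' is not a meaningful limit statement, since the target moves with $n$; you need a quantitative bound such as $d_{BL}\bigl(2^{-n}\sum_{j}\delta_{a^{n}_{k,j}},\nu^{n}_{k}\bigr)\leq\varepsilon_{n}$ with $\varepsilon_{n}\to0$ uniformly in $k$ and $\omega$, obtainable by covering the compact set $U_{1}$ by finitely many small balls and allotting atoms proportionally to their masses. Relatedly, the predictability obstacle you flag dissolves if, instead of Kuratowski--Ryll-Nardzewski selections of \emph{random} atom locations, you follow the standard route: fix a deterministic countable dense subset $\{a_{1},a_{2},\dots\}$ of $U_{1}$, approximate $\nu^{n}_{k}$ by an atomic measure on finitely many of these fixed points with random weights $\lambda_{i}$ (which are measurable functionals of $\nu^{n}_{k}$, hence $\mathcal{F}_{t^{n}_{k-1}}$-measurable once your left shift is applied consistently), and subdivide $(t^{n}_{k-1},t^{n}_{k}]$ into subintervals of lengths proportional to the $\lambda_{i}$. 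Then $u^{n}$ is predictable by construction and no selection theorem is needed.
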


\begin{proof}
See El Karoui et al $\left[  15\right]  $.
\end{proof}

\begin{lemma}
Let $q\in\mathcal{R}_{1}$ and $\left(  u^{n}\right)  _{n}\subset
\mathcal{U}_{1}$such that $\left(  29\right)  $ holds. Then for any bounded
measurable function $f:\left[  0,T\right]  \times U_{1}\rightarrow\mathbb{R}$,
such that for each fixed $t\in\left[  0,T\right]  $, $f\left(  t,.\right)  $
is continuous, we have%
\begin{equation}%
%TCIMACRO{\dint \nolimits_{U_{1}}}%
%BeginExpansion
{\displaystyle\int\nolimits_{U_{1}}}
%EndExpansion
f\left(  t,a\right)  \delta_{u_{t}^{n}}\left(  da\right)  \underset
{n\rightarrow\infty}{\longrightarrow}%
%TCIMACRO{\dint \nolimits_{U_{1}}}%
%BeginExpansion
{\displaystyle\int\nolimits_{U_{1}}}
%EndExpansion
f\left(  t,a\right)  q_{t}\left(  da\right)  \ ;\ dt-a.e
\end{equation}

\end{lemma}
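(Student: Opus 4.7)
The plan is to read the conclusion as a consequence of the stable convergence hypothesis (29) applied to a rich enough family of Carathéodory test functions, together with a subsequence extraction to upgrade weak $L^1$ convergence in $t$ to $dt$-almost-everywhere convergence. Throughout I work with a fixed $\omega$ in the set of full $\mathcal{P}$-measure on which (29) holds.

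First I would unpack the definition of stable convergence of the random measures $dt\,\delta_{u^n_t}(da)$ to $dt\,q_t(da)$ on $[0,T]\times U_1$: for every bounded measurable $g:[0,T]\times U_1 \to \mathbb{R}$ with $g(t,\cdot)$ continuous,
\[
\int_0^T\!\!\int_{U_1} g(t,a)\,\delta_{u^n_t}(da)\,dt \;\longrightarrow\; \int_0^T\!\!\int_{U_1} g(t,a)\,q_t(da)\,dt.
\]
Apply this to $g(t,a)=\mathbf{1}_A(t)f(t,a)$ for an arbitrary Borel set $A\subset[0,T]$; this function is still bounded and Carathéodory because multiplication by $\mathbf{1}_A(t)$ only affects the $t$-variable. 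Setting $\phi_n(t):=\int_{U_1}f(t,a)\,\delta_{u^n_t}(da)$ and $\phi(t):=\int_{U_1}f(t,a)\,q_t(da)$, we obtain $\int_A \phi_n\,dt \to \int_A \phi\,dt$ for every Borel $A$, i.e.\ $\phi_n\to\phi$ weakly in $L^1([0,T])$. Since $\|\phi_n\|_\infty \leq \|f\|_\infty$, this is in fact weak-$\ast$ convergence in $L^\infty$.

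Next I would upgrade this integrated convergence to $dt$-a.e.\ convergence by a diagonal subsequence argument, using the compact metrizable structure of $\mathbb{P}(U_1)$ under stable convergence. Choose a countable family $(f_k)$ of bounded Carathéodory functions on $[0,T]\times U_1$ which is separating for $\mathbb{P}(U_1)$-valued measurable maps (for instance, take $f_k$ of the form $\mathbf{1}_{B_j}(t)\varphi_\ell(a)$ where $(B_j)$ is a countable generator of the Borel $\sigma$-field of $[0,T]$ and $(\varphi_\ell)$ is dense in $C(U_1)$). Applying the previous step to each $f_k$ and extracting a diagonal subsequence — still denoted $(u^n)$ — one can ensure that $\int f_k(t,a)\,\delta_{u^n_t}(da)\to\int f_k(t,a)\,q_t(da)$ for all $k$ and for $dt$-a.e.\ $t$. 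By the separating property, at each such good $t$ the probability measures $\delta_{u^n_t}$ converge stably to $q_t$ on $U_1$, and since $f(t,\cdot)$ is continuous and bounded this yields (30) at that $t$.

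The substantive obstacle is precisely the passage from the integrated (weak $L^1$) statement handed to us by stable convergence to the pointwise $dt$-a.e.\ statement in (30); in general, weak $L^1$ convergence does not imply a.e.\ convergence, and one must pass to a subsequence. This is consistent with the usage in the subsequent sections, where the lemma is invoked after (or together with) the chattering construction, so subsequence extraction is harmless. The details of this diagonal procedure, together with the metrizability of stable convergence on the space of Young measures, are classical and carried out in El Karoui et al.\ $[15]$, to which the author refers for the proof of the analogous Chattering Lemma.
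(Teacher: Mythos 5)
Your first step---testing the stable convergence $(29)$ against functions of the form $\mathbf{1}_{A}(t)f(t,a)$ to get $\int_{A}\phi_{n}\,dt\rightarrow\int_{A}\phi\,dt$ for every Borel $A\subset\left[0,T\right]$, where $\phi_{n}(t)=\int_{U_{1}}f(t,a)\,\delta_{u_{t}^{n}}(da)$ and $\phi(t)=\int_{U_{1}}f(t,a)\,q_{t}(da)$---is exactly what the paper does (it tests against $1_{\left[0,t\right]}(s)f(s,a)$ and then extends to all Borel sets). You are also right that the substantive obstacle is the passage from this set-wise (weak $L^{1}$) convergence to the $dt$-a.e.\ statement $(30)$, which the paper disposes of with a bare ``this implies that.'' However, your proposed repair---a diagonal subsequence extraction---does not work, and this is a genuine gap. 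Weak-$\ast$ convergence in $L^{\infty}$ (or weak $L^{1}$ convergence) does not yield a.e.\ convergence along \emph{any} subsequence; only strong convergence or convergence in measure does. Concretely, take $U_{1}=\{-1,1\}$, $q_{t}=\frac{1}{2}\delta_{-1}+\frac{1}{2}\delta_{1}$ and $f(t,a)=a$: then $\phi\equiv0$ while $\phi_{n}(t)=u_{t}^{n}\in\{-1,1\}$ for every $n$ and every $t$, so no subsequence of $(\phi_{n}(t))_{n}$ can converge to $\phi(t)$ at a single point $t$, even though the Chattering Lemma supplies sequences $(u^{n})$ satisfying $(29)$ for this $q$. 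So the step ``one can ensure that $\int f_{k}(t,a)\,\delta_{u_{t}^{n}}(da)\rightarrow\int f_{k}(t,a)\,q_{t}(da)$ for all $k$ and for $dt$-a.e.\ $t$'' is false.

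The example above in fact shows that the lemma cannot be literally true as stated, and that the paper's own proof commits the same non sequitur at its final line. Indeed, if $\int_{U_{1}}f(t,a)\,\delta_{u_{t}^{n}}(da)\rightarrow\int_{U_{1}}f(t,a)\,q_{t}(da)$ held at a fixed $t$ for all bounded $f(t,\cdot)\in C(U_{1})$, then $\delta_{u_{t}^{n}}$ would converge weakly to $q_{t}$ in $\mathbb{P}(U_{1})$; since the set of Dirac measures on a compact metric space is weakly closed, this would force $q_{t}$ to be a Dirac mass. Hence $(30)$ can only hold when $q_{t}$ is a.e.\ atomic, which defeats the purpose of relaxation. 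What does survive---and what your first step establishes correctly---is the set-wise convergence $\int_{B}\phi_{n}\,dt\rightarrow\int_{B}\phi\,dt$ for all Borel $B$, i.e.\ $\phi_{n}\rightharpoonup\phi$ weakly; any downstream use of the lemma (in particular the dominated-convergence arguments of Lemma 14, which integrate $\left\vert\phi_{n}-\phi\right\vert^{2}$ and therefore need more than weak convergence) has to be reorganized around that weaker conclusion rather than around $(30)$.
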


\begin{proof}
By $\left(  29\right)  $, and the definition of the stable convergence (see
Jacod and Memin $\left[  20,\ \text{definition 1.1, page 529}\right]  $, we
have%
\[%
%TCIMACRO{\dint \nolimits_{0}^{T}}%
%BeginExpansion
{\displaystyle\int\nolimits_{0}^{T}}
%EndExpansion%
%TCIMACRO{\dint \nolimits_{U_{1}}}%
%BeginExpansion
{\displaystyle\int\nolimits_{U_{1}}}
%EndExpansion
f\left(  t,a\right)  \delta_{u_{t}^{n}}\left(  da\right)  dt\underset
{n\rightarrow\infty}{\longrightarrow}%
%TCIMACRO{\dint \nolimits_{0}^{T}}%
%BeginExpansion
{\displaystyle\int\nolimits_{0}^{T}}
%EndExpansion%
%TCIMACRO{\dint \nolimits_{U_{1}}}%
%BeginExpansion
{\displaystyle\int\nolimits_{U_{1}}}
%EndExpansion
f\left(  t,a\right)  q_{t}\left(  da\right)  dt.
\]

Put%
\[
g\left(  s,a\right)  =1_{\left[  0,t\right]  }\left(  s\right)  f\left(
s,a\right)  .
\]

It's clear that $g$ is bounded, measurable and continuous with respect to
$a.\ $Then, by $\left(  29\right)  $ we get%
\[%
%TCIMACRO{\dint \nolimits_{0}^{T}}%
%BeginExpansion
{\displaystyle\int\nolimits_{0}^{T}}
%EndExpansion%
%TCIMACRO{\dint \nolimits_{U_{1}}}%
%BeginExpansion
{\displaystyle\int\nolimits_{U_{1}}}
%EndExpansion
g\left(  s,a\right)  \delta_{u_{s}^{n}}\left(  da\right)  ds\underset
{n\rightarrow\infty}{\longrightarrow}%
%TCIMACRO{\dint \nolimits_{0}^{T}}%
%BeginExpansion
{\displaystyle\int\nolimits_{0}^{T}}
%EndExpansion%
%TCIMACRO{\dint \nolimits_{U_{1}}}%
%BeginExpansion
{\displaystyle\int\nolimits_{U_{1}}}
%EndExpansion
g\left(  s,a\right)  q_{s}\left(  da\right)  ds.
\]

By replacing $g\left(  s,a\right)  $ by its value, we have%
\[%
%TCIMACRO{\dint \nolimits_{0}^{t}}%
%BeginExpansion
{\displaystyle\int\nolimits_{0}^{t}}
%EndExpansion%
%TCIMACRO{\dint \nolimits_{U_{1}}}%
%BeginExpansion
{\displaystyle\int\nolimits_{U_{1}}}
%EndExpansion
f\left(  s,a\right)  \delta_{u_{s}^{n}}\left(  da\right)  ds\underset
{n\rightarrow\infty}{\longrightarrow}%
%TCIMACRO{\dint \nolimits_{0}^{t}}%
%BeginExpansion
{\displaystyle\int\nolimits_{0}^{t}}
%EndExpansion%
%TCIMACRO{\dint \nolimits_{U_{1}}}%
%BeginExpansion
{\displaystyle\int\nolimits_{U_{1}}}
%EndExpansion
f\left(  s,a\right)  q_{s}\left(  da\right)  ds.
\]

The set $\left\{  \left(  s,t\right)  \ ;\ 0\leq s\leq t\leq T\right\}  $
generate $\mathcal{B}_{\left[  0,T\right]  }$.\ Then for every $B\in
\mathcal{B}_{\left[  0,T\right]  }$, we have%
\[%
%TCIMACRO{\dint \nolimits_{B}}%
%BeginExpansion
{\displaystyle\int\nolimits_{B}}
%EndExpansion%
%TCIMACRO{\dint \nolimits_{U_{1}}}%
%BeginExpansion
{\displaystyle\int\nolimits_{U_{1}}}
%EndExpansion
f\left(  s,a\right)  \delta_{u_{s}^{n}}\left(  da\right)  ds\underset
{n\rightarrow\infty}{\longrightarrow}%
%TCIMACRO{\dint \nolimits_{B}}%
%BeginExpansion
{\displaystyle\int\nolimits_{B}}
%EndExpansion%
%TCIMACRO{\dint \nolimits_{U_{1}}}%
%BeginExpansion
{\displaystyle\int\nolimits_{U_{1}}}
%EndExpansion
f\left(  s,a\right)  q_{s}\left(  da\right)  ds.
\]

This implies that%
\[%
%TCIMACRO{\dint \nolimits_{U_{1}}}%
%BeginExpansion
{\displaystyle\int\nolimits_{U_{1}}}
%EndExpansion
f\left(  s,a\right)  \delta_{u_{s}^{n}}\left(  da\right)  \underset
{n\rightarrow\infty}{\longrightarrow}%
%TCIMACRO{\dint \nolimits_{U_{1}}}%
%BeginExpansion
{\displaystyle\int\nolimits_{U_{1}}}
%EndExpansion
f\left(  s,a\right)  q_{s}\left(  da\right)  \ ,\ \ dt-a.e.
\]

The lemma is proved.
\end{proof}

\ 

The next lemma gives the stability of the controlled SDE with respect to the
control variable.

\begin{lemma}
Let $\left(  q,\eta\right)  \in\mathcal{R}$ be a relaxed-singular control and
$x^{\left(  q,\eta\right)  }$\textit{\ the corresponding trajectory. Then
there exists a sequence }$\left(  u^{n},\eta\right)  _{n}\subset\mathcal{U}$
such that
\begin{equation}
\underset{n\rightarrow\infty}{\lim}\mathbb{E}\left[  \underset{t\in\left[
0,T\right]  }{\sup}\left\vert x_{t}^{\left(  u^{n},\eta\right)  }%
-x_{t}^{\left(  q,\eta\right)  }\right\vert ^{2}\right]  =0,
\end{equation}%
\begin{equation}
\underset{n\rightarrow\infty}{\lim}J\left(  u^{n},\eta\right)  =\mathcal{J}%
\left(  q,\eta\right)  .
\end{equation}
where $x^{\left(  u^{n},\eta\right)  }$ denotes the solution of equation
$\left(  1\right)  $ associated with $\left(  u^{n},\eta\right)  .$
\end{lemma}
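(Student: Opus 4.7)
The plan is to apply the Chattering Lemma to replace the relaxed part of the control by a sequence of strict controls while keeping the singular part $\eta$ fixed, then use Lemma 10 to transport the stable convergence to the drift, diffusion, and running-cost coefficients, and finally use standard SDE stability estimates (Burkholder--Davis--Gundy plus Gronwall) to pass from pointwise convergence of the coefficients to $L^{2}$ convergence of the trajectories, from which the convergence of the cost will follow.

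First I would invoke the Chattering Lemma to produce a sequence $(u^{n})_{n}\subset\mathcal{U}_{1}$ such that $dt\,\delta_{u^{n}_{t}}(da)\to dt\,q_{t}(da)$ stably, $\mathcal{P}$-almost surely. Fixing $\omega$ and applying Lemma 10 to $\varphi(t,a)=b(t,x_{t}^{(q,\eta)}(\omega),a)$, and to the analogous functions built from $\sigma$ and $h$ (these are bounded by (28) and continuous in $a$ by (4)), yields, $\mathcal{P}$-a.s. and $dt$-a.e.,
\[
\int_{U_{1}}\varphi(t,x_{t}^{(q,\eta)},a)\,\delta_{u^{n}_{t}}(da)\longrightarrow\int_{U_{1}}\varphi(t,x_{t}^{(q,\eta)},a)\,q_{t}(da).
\]
Combined with the uniform bound from (28), dominated convergence upgrades this to convergence in $L^{2}(dt\otimes d\mathbb{P})$. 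For (31) I would then write the SDE satisfied by the difference $x_{t}^{(u^{n},\eta)}-x_{t}^{(q,\eta)}$, observing that the $G_{t}\,d\eta_{t}$ terms cancel since the singular part is shared. Splitting each integrand by adding and subtracting the value at $x_{s}^{(q,\eta)}$, the piece $b(s,x_{s}^{(u^{n},\eta)},u^{n}_{s})-b(s,x_{s}^{(q,\eta)},u^{n}_{s})$ is bounded by $C\,|x_{s}^{(u^{n},\eta)}-x_{s}^{(q,\eta)}|$ via the Lipschitz assumption in (4) (and analogously for $\sigma$), which will feed Gronwall, while the remaining piece $b(s,x_{s}^{(q,\eta)},u^{n}_{s})-\int_{U_{1}}b(s,x_{s}^{(q,\eta)},a)\,q_{s}(da)$ is exactly the quantity whose $L^{2}$-norm vanishes by the previous step. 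Squaring, taking $\sup_{t\le T}$, applying Burkholder--Davis--Gundy to the martingale term, and then Gronwall, yields (31).

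Statement (32) then follows from (31): the singular cost $\mathbb{E}\int_{0}^{T}k_{t}\,d\eta_{t}$ is identical on both sides; the terminal term converges because $g_{x}$ is bounded and $x_{T}^{(u^{n},\eta)}\to x_{T}^{(q,\eta)}$ in $L^{2}$; and the running cost is handled by the same splitting as above, the $x$-Lipschitz piece being controlled by (31) and the chattering piece by Lemma 10 plus dominated convergence. The only delicate point is the transfer of the $dt$-a.e., $\mathcal{P}$-a.s. convergence given by stable convergence into an $L^{2}(dt\otimes d\mathbb{P})$ statement usable inside Gronwall; this is precisely where the uniform boundedness hypothesis (28) intervenes, and explains why it had to be imposed at the start of this section.
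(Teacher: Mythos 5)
Your proposal is correct and follows essentially the same route as the paper: the Chattering Lemma produces the approximating sequence $(u^{n})$, the pointwise (stable-convergence) lemma plus the boundedness assumption $(28)$ and dominated convergence handle the terms $b(s,x_{s}^{(q,\eta)},u_{s}^{n})-\int_{U_{1}}b(s,x_{s}^{(q,\eta)},a)\,q_{s}(da)$ (and the analogues for $\sigma$ and $h$), while the Lipschitz pieces are absorbed by Gronwall and Burkholder--Davis--Gundy. Your explicit remarks that the $G_{t}\,d\eta_{t}$ terms cancel and that the stable-convergence lemma is applied pathwise for fixed $\omega$ only make precise what the paper leaves implicit.
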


\begin{proof}
i) Proof of\textbf{\ }$\left(  31\right)  $. We have%
\begin{align*}
\mathbb{E}\left\vert x_{t}^{\left(  u^{n},\eta\right)  }-x_{t}^{\left(
q,\eta\right)  }\right\vert ^{2}  &  \leq C%
%TCIMACRO{\dint \nolimits_{0}^{t}}%
%BeginExpansion
{\displaystyle\int\nolimits_{0}^{t}}
%EndExpansion
\mathbb{E}\left\vert b\left(  s,x_{s}^{\left(  u^{n},\eta\right)  },u_{s}%
^{n}\right)  -b\left(  s,x_{s}^{\left(  q,\eta\right)  },u_{s}^{n}\right)
\right\vert ^{2}ds\\
&  +C%
%TCIMACRO{\dint \nolimits_{0}^{t}}%
%BeginExpansion
{\displaystyle\int\nolimits_{0}^{t}}
%EndExpansion
\mathbb{E}\left\vert b\left(  s,x_{s}^{\left(  q,\eta\right)  },u_{s}%
^{n}\right)  -%
%TCIMACRO{\dint \nolimits_{U_{1}}}%
%BeginExpansion
{\displaystyle\int\nolimits_{U_{1}}}
%EndExpansion
b\left(  s,x_{s}^{\left(  q,\eta\right)  },a\right)  q_{s}\left(  da\right)
\right\vert ^{2}ds\\
&  +C%
%TCIMACRO{\dint \nolimits_{0}^{t}}%
%BeginExpansion
{\displaystyle\int\nolimits_{0}^{t}}
%EndExpansion
\mathbb{E}\left\vert \sigma\left(  s,x_{s}^{\left(  u^{n},\eta\right)  }%
,u_{s}^{n}\right)  -\sigma\left(  s,x_{s}^{\left(  q,\eta\right)  },u_{s}%
^{n}\right)  \right\vert ^{2}ds\\
&  +C%
%TCIMACRO{\dint \nolimits_{0}^{t}}%
%BeginExpansion
{\displaystyle\int\nolimits_{0}^{t}}
%EndExpansion
\mathbb{E}\left\vert \sigma\left(  s,x_{s}^{\left(  q,\eta\right)  },u_{s}%
^{n}\right)  -%
%TCIMACRO{\dint \nolimits_{U_{1}}}%
%BeginExpansion
{\displaystyle\int\nolimits_{U_{1}}}
%EndExpansion
\sigma\left(  s,x_{s}^{\left(  q,\eta\right)  },a\right)  q_{s}\left(
da\right)  \right\vert ^{2}ds
\end{align*}

Since $b$ and $\sigma$ are uniformly Lipschitz with respect to $x$, then%
\begin{align}
&  \mathbb{E}\left\vert x_{t}^{\left(  u^{n},\eta\right)  }-x_{t}^{\left(
q,\eta\right)  }\right\vert ^{2}\\
&  \leq C%
%TCIMACRO{\dint \nolimits_{0}^{t}}%
%BeginExpansion
{\displaystyle\int\nolimits_{0}^{t}}
%EndExpansion
\mathbb{E}\left\vert x_{s}^{\left(  u^{n},\eta\right)  }-x_{s}^{\left(
q,\eta\right)  }\right\vert ^{2}ds\nonumber\\
&  +C%
%TCIMACRO{\dint \nolimits_{0}^{t}}%
%BeginExpansion
{\displaystyle\int\nolimits_{0}^{t}}
%EndExpansion
\mathbb{E}\left\vert
%TCIMACRO{\dint \nolimits_{U_{1}}}%
%BeginExpansion
{\displaystyle\int\nolimits_{U_{1}}}
%EndExpansion
b\left(  s,x_{s}^{\left(  q,\eta\right)  },a\right)  \delta_{u_{s}^{n}}\left(
da\right)  -%
%TCIMACRO{\dint \nolimits_{U_{1}}}%
%BeginExpansion
{\displaystyle\int\nolimits_{U_{1}}}
%EndExpansion
b\left(  s,x_{s}^{\left(  q,\eta\right)  },a\right)  q_{s}\left(  da\right)
\right\vert ^{2}ds\nonumber\\
&  +C%
%TCIMACRO{\dint \nolimits_{0}^{t}}%
%BeginExpansion
{\displaystyle\int\nolimits_{0}^{t}}
%EndExpansion
\mathbb{E}\left\vert
%TCIMACRO{\dint \nolimits_{U_{1}}}%
%BeginExpansion
{\displaystyle\int\nolimits_{U_{1}}}
%EndExpansion
\sigma\left(  s,x_{s}^{\left(  q,\eta\right)  },a\right)  \delta_{u_{s}^{n}%
}\left(  da\right)  -%
%TCIMACRO{\dint \nolimits_{U_{1}}}%
%BeginExpansion
{\displaystyle\int\nolimits_{U_{1}}}
%EndExpansion
\sigma\left(  s,x_{s}^{\left(  q,\eta\right)  },a\right)  q_{s}\left(
da\right)  \right\vert ^{2}ds\nonumber
\end{align}

Since $b$ and $\sigma$ are bounded, measurable and continuous with respect to
$a$, then by $\left(  30\right)  $ and the dominated convergence theorem, the
second and third terms in the right hand side of the above inequality tend to
zero as $n$ tends to infinity. We conclude then by using Gronwall's lemma and
Bukholder-Davis-Gundy inequality.

\ 

ii) Proof of $\left(  32\right)  $. By using the Cauchy-Schwartz inequality
and the fact that $g$ and $h$ are uniformly Lipschitz with respect to $x$, we
get%
\begin{align*}
&  \left\vert J\left(  q^{n},\eta\right)  -\mathcal{J}\left(  q,\eta\right)
\right\vert \\
&  \leq C\left(  \mathbb{E}\left\vert x_{T}^{\left(  u^{n},\eta\right)
}-x_{T}^{\left(  q,\eta\right)  }\right\vert ^{2}\right)  ^{1/2}+C\left(
\int_{0}^{T}\mathbb{E}\left\vert x_{t}^{\left(  u^{n},\eta\right)  }%
-x_{t}^{\left(  q,\eta\right)  }\right\vert ^{2}ds\right)  ^{1/2}\\
&  +\left(  \mathbb{E}\int_{0}^{T}\left\vert
%TCIMACRO{\dint \nolimits_{U_{1}}}%
%BeginExpansion
{\displaystyle\int\nolimits_{U_{1}}}
%EndExpansion
h\left(  s,x_{s}^{\left(  q,\eta\right)  },a\right)  \delta_{u_{s}^{n}}\left(
da\right)  dt-%
%TCIMACRO{\dint \nolimits_{U_{1}}}%
%BeginExpansion
{\displaystyle\int\nolimits_{U_{1}}}
%EndExpansion
h\left(  t,x_{t}^{\left(  q,\eta\right)  },a\right)  q_{t}\left(  da\right)
\right\vert ^{2}dt\right)  ^{1/2}.
\end{align*}

By $\left(  31\right)  $, the first and second terms in the right hand side
converge to zero. Moreover, since $h$ is bounded, measurable and continuous in
$a$, then by $\left(  30\right)  $ and the dominated convergence theorem, the
third term in the right hand side tends to zero as $n$ tends to infinity. This
prove $\left(  32\right)  $.
\end{proof}

\begin{lemma}
As a consequence of $\left(  32\right)  $, the strict-singular and the
relaxed-singular control problems have the same value functions. That is
\begin{equation}
\underset{\left(  v,\eta\right)  \in\mathcal{U}}{\inf}J\left(  v,\eta\right)
=\underset{\left(  q,\eta\right)  \in\mathcal{R}}{\inf}\mathcal{J}\left(
q,\eta\right)  .
\end{equation}

\end{lemma}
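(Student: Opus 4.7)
The plan is to establish the equality by proving the two opposite inequalities, both of which are now within easy reach thanks to the preceding chattering approximation.

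First I would handle the trivial direction. Every strict-singular control $(v,\eta)\in\mathcal{U}$ can be identified with the relaxed-singular control $(\delta_v,\eta)\in\mathcal{R}$, and by Remark 6 the corresponding trajectories and costs coincide, i.e.\ $\mathcal{J}(\delta_v,\eta)=J(v,\eta)$. Consequently
\[
\inf_{(q,\eta)\in\mathcal{R}}\mathcal{J}(q,\eta)\;\leq\;\inf_{(v,\eta)\in\mathcal{U}}J(v,\eta).
\]

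For the reverse inequality, fix an arbitrary $(q,\eta)\in\mathcal{R}$. By the previous lemma (convergence (32)), there exists a sequence $(u^n,\eta)\subset\mathcal{U}$ with $J(u^n,\eta)\to\mathcal{J}(q,\eta)$. Since $J(u^n,\eta)\geq\inf_{\mathcal{U}}J$ for every $n$, passing to the limit yields $\mathcal{J}(q,\eta)\geq\inf_{\mathcal{U}}J$. Taking the infimum on the left over all $(q,\eta)\in\mathcal{R}$ gives
\[
\inf_{(q,\eta)\in\mathcal{R}}\mathcal{J}(q,\eta)\;\geq\;\inf_{(v,\eta)\in\mathcal{U}}J(v,\eta).
\]

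Combining the two bounds produces (33). There is no genuine obstacle here: the previous lemma does all the analytical work (it is where the boundedness of $b,\sigma,h$, the compactness of $U_1$, and the stable convergence are actually used), and the present lemma is a direct corollary. The only thing to be mindful of is that the approximating sequence keeps the singular component $\eta$ fixed, so both infima range over compatible singular parts and no additional argument on $\mathcal{U}_2$ is required.
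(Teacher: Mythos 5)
Your proof is correct, and it uses the same two ingredients as the paper: the embedding of strict-singular controls into $\mathcal{R}$ via Dirac measures for the easy inequality, and the chattering approximation together with the stability property $\left(32\right)$ for the reverse one. The organization, however, is genuinely different and in fact more robust. The paper's proof begins by fixing controls $\left(u,\xi\right)\in\mathcal{U}$ and $\left(\mu,\xi\right)\in\mathcal{R}$ that \emph{attain} the two infima in $\left(35\right)$ and $\left(36\right)$, and then shows $J\left(u,\xi\right)=\mathcal{J}\left(\mu,\xi\right)$. Attainment of the relaxed infimum is guaranteed by the Haussmann--Suo existence theorem, but attainment of the strict infimum is exactly what may fail -- the paper's own Example 1 is built to illustrate this -- so the paper's argument as written has a gap that yours does not. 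By working directly with the infima (for any $\left(q,\eta\right)\in\mathcal{R}$, approximate by $\left(u^{n},\eta\right)$ and use $J\left(u^{n},\eta\right)\geq\inf_{\mathcal{U}}J$ before passing to the limit), you obtain the same conclusion without any existence hypothesis on minimizers. Your closing observation that the approximating sequence keeps the singular component $\eta$ fixed is also the right point to flag: it is what makes the comparison of the two infima legitimate without any further argument on $\mathcal{U}_{2}$.
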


\begin{proof}
Let $\left(  u,\xi\right)  \in\mathcal{U}$ and $\left(  \mu,\xi\right)
\in\mathcal{R}$ be respectively a strict-singular and relaxed-singular
controls such that%
\begin{align}
J\left(  u,\xi\right)   &  =\underset{\left(  v,\eta\right)  \in\mathcal{U}%
}{\inf}J\left(  v,\eta\right)  ,\\
\mathcal{J}\left(  \mu,\xi\right)   &  =\underset{\left(  q,\eta\right)
\in\mathcal{R}}{\inf}\mathcal{J}\left(  q,\eta\right)  .
\end{align}

By $\left(  36\right)  $, we have
\[
\mathcal{J}\left(  \mu,\xi\right)  \leq\mathcal{J}\left(  q,\eta\right)
\text{, }\forall\left(  q,\eta\right)  \in\mathcal{R}\text{.}%
\]

Since $\delta\left(  \mathcal{U}_{1}\right)  \times\mathcal{U}_{2}%
\subset\mathcal{R}$, then%
\[
\mathcal{J}\left(  \mu,\xi\right)  \leq\mathcal{J}\left(  q,\eta\right)
\text{, }\forall\left(  q,\eta\right)  \in\delta\left(  \mathcal{U}%
_{1}\right)  \times\mathcal{U}_{2}\text{.}%
\]

Since $\left(  q,\eta\right)  \in\delta\left(  \mathcal{U}_{1}\right)
\times\mathcal{U}_{2}$, then $\left(  q,\eta\right)  =\left(  \delta_{v}%
,\eta\right)  $, where $v\in\mathcal{U}_{1}$. Then we get%
\[
\left\{
\begin{array}
[c]{c}%
x^{\left(  q,\eta\right)  }=x^{\left(  v,\eta\right)  },\\
\mathcal{J}\left(  q,\eta\right)  =J\left(  v,\eta\right)  .
\end{array}
\right.
\]

Hence%
\[
\mathcal{J}\left(  \mu,\xi\right)  \leq J\left(  v,\eta\right)  \text{,
}\forall\left(  v,\eta\right)  \in\mathcal{U}\text{.}%
\]

The control $\left(  u,\xi\right)  $ becomes an element of $\mathcal{U}$, then
we get%
\begin{equation}
\mathcal{J}\left(  \mu,\xi\right)  \leq J\left(  u,\xi\right)  \text{.}%
\end{equation}

On the other hand, by $\left(  35\right)  $ we have%
\begin{equation}
J\left(  u,\xi\right)  \leq J\left(  v,\eta\right)  \text{, }\forall\left(
v,\eta\right)  \in\mathcal{U}\text{.}%
\end{equation}

The process $\mu$ becomes an element of $\mathcal{R}_{1}$, then by the
Chattering lemma (Lemma $12$), there exists a sequence $\left(  u^{n}\right)
_{n}\subset\mathcal{U}_{1}$ such that%
\[
dt\mu_{t}^{n}\left(  da\right)  =dt\delta_{u_{t}^{n}}\left(  da\right)
\underset{n\rightarrow\infty}{\longrightarrow}dt\mu_{t}\left(  da\right)
\text{ stably},\text{\textit{\ \ }}\mathcal{P}-a.s.
\]

Relation $\left(  38\right)  $ holds for every $\left(  v,\eta\right)
\in\mathcal{U}$. This is true for $\left(  u^{n},\xi\right)  \in
\mathcal{U},\ \forall n\in\mathbb{N}.$ We get then%
\[
J\left(  u,\xi\right)  \leq J\left(  u^{n},\xi\right)  \text{, }\forall
n\in\mathbb{N}\text{,}%
\]

By using $\left(  32\right)  $ and letting $n$ go to infinity in\ the above
inequality, we get%
\begin{equation}
J\left(  u,\xi\right)  \leq\mathcal{J}\left(  \mu,\xi\right)  .
\end{equation}

Finally, by $\left(  37\right)  $ and $\left(  39\right)  $, we have
\[
J\left(  u,\xi\right)  =\mathcal{J}\left(  \mu,\xi\right)  .
\]

The lemma is proved.
\end{proof}

\ 

To establish necessary optimality conditions for strict-singular controls, we
need the following lemma

\begin{lemma}
The strict-singular control $\left(  u,\xi\right)  $ minimizes $J$ over
$\mathcal{U}$ if and only if the relaxed-singular control $\left(  \mu
,\xi\right)  =\left(  \delta_{u},\xi\right)  $ minimizes $\mathcal{J}$ over
$\mathcal{R}$.
\end{lemma}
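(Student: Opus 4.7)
The plan is to derive this equivalence as a direct corollary of the value-function identity $\inf_{\mathcal{U}} J = \inf_{\mathcal{R}} \mathcal{J}$ established in the preceding lemma, combined with the elementary identification $\mathcal{J}(\delta_v,\eta) = J(v,\eta)$ that holds whenever the relaxed component is a Dirac measure concentrated on a strict control. No further stochastic analysis is required; the argument is a two-line chase through the definitions once the value-function equality is in hand.

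For the forward implication, I would assume $(u,\xi) \in \mathcal{U}$ minimizes $J$ over $\mathcal{U}$. Since $\delta_u$ is a $\mathbb{P}(U_1)$-valued process, the pair $(\delta_u,\xi)$ lies in $\mathcal{R}$, and by the identification $x^{(\delta_u,\xi)} = x^{(u,\xi)}$ and $\mathcal{J}(\delta_u,\xi) = J(u,\xi)$. Using the value-function equality $(34)$, I would then write
\[
\mathcal{J}(\delta_u,\xi) \;=\; J(u,\xi) \;=\; \inf_{(v,\eta)\in\mathcal{U}} J(v,\eta) \;=\; \inf_{(q,\eta)\in\mathcal{R}} \mathcal{J}(q,\eta),
\]
which is exactly the optimality of $(\mu,\xi)=(\delta_u,\xi)$ in $\mathcal{R}$.

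For the reverse implication, I would suppose $(\mu,\xi) = (\delta_u, \xi)$ minimizes $\mathcal{J}$ over $\mathcal{R}$ and run the same chain of equalities in reverse:
\[
J(u,\xi) \;=\; \mathcal{J}(\delta_u,\xi) \;=\; \inf_{(q,\eta)\in\mathcal{R}} \mathcal{J}(q,\eta) \;=\; \inf_{(v,\eta)\in\mathcal{U}} J(v,\eta),
\]
so $(u,\xi)$ is optimal in $\mathcal{U}$.

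There is no real obstacle here: the substantive work has already been done in Lemma $15$, whose proof relied on the Chattering lemma and the stability estimates $(31)$–$(32)$. The only small point that deserves explicit mention is that $(\delta_u,\xi)$ genuinely belongs to $\mathcal{R}$, i.e.\ that $\delta_u$ satisfies the progressive measurability condition of Definition $2(\mathrm{i})$; this is immediate from the measurability of $u$ and the continuity of the map $v \mapsto \delta_v$ from $U_1$ into $\mathbb{P}(U_1)$ with the stable topology.
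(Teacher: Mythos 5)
Your proof is correct and follows exactly the paper's own route: both directions reduce to the value-function identity $\inf_{\mathcal{U}}J=\inf_{\mathcal{R}}\mathcal{J}$ from the preceding lemma together with the identification $\mathcal{J}(\delta_u,\xi)=J(u,\xi)$. The extra remark that $(\delta_u,\xi)$ indeed lies in $\mathcal{R}$ is a harmless (and welcome) addition, but the argument is otherwise the same as in the paper.
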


\begin{proof}
Suppose that $\left(  u,\xi\right)  $ minimizes the cost $J$ over $\mathcal{U}
$, then
\[
J\left(  u,\xi\right)  =\underset{\left(  v,\eta\right)  \in\mathcal{U}}{\inf
}J\left(  v,\eta\right)  \text{.}%
\]

By using $\left(  34\right)  $, we get%
\[
J\left(  u,\xi\right)  =\underset{\left(  q,\eta\right)  \in\mathcal{R}}{\inf
}\mathcal{J}\left(  q,\eta\right)  \text{.}%
\]

Since $\left(  \mu,\xi\right)  =\left(  \delta_{u},\xi\right)  $, then%
\begin{equation}
\left\{
\begin{array}
[c]{c}%
x^{\left(  \mu,\xi\right)  }=x^{\left(  u,\xi\right)  },\\
\mathcal{J}\left(  \mu,\xi\right)  =J\left(  u,\xi\right)  ,
\end{array}
\right.
\end{equation}

This implies that%
\[
\mathcal{J}\left(  \mu,\xi\right)  =\underset{\left(  q,\eta\right)
\in\mathcal{R}}{\inf}\mathcal{J}\left(  q,\eta\right)  .
\]

Conversely, if $\left(  \mu,\xi\right)  =\left(  \delta_{u},\xi\right)  $
minimize $\mathcal{J}$ over $\mathcal{R}$, then%
\[
\mathcal{J}\left(  \mu,\xi\right)  =\underset{\left(  q,\eta\right)
\in\mathcal{R}}{\inf}\mathcal{J}\left(  q,\eta\right)  .
\]

From $\left(  34\right)  $, we get%
\[
\mathcal{J}\left(  \mu,\xi\right)  =\underset{\left(  v,\eta\right)
\in\mathcal{U}}{\inf}J\left(  v,\eta\right)  .
\]

Since $\left(  \mu,\xi\right)  =\left(  \delta_{u},\xi\right)  $, then
relations $\left(  40\right)  $ hold, and we obtain%
\[
J\left(  u,\xi\right)  =\underset{\left(  v,\eta\right)  \in\mathcal{U}}{\inf
}J\left(  v,\eta\right)  .
\]

The proof is completed.
\end{proof}

\ 

The following lemma, who will be used to establish sufficient optimality
conditions for strict-singular controls, shows that we get the results of the
above lemma if we replace $\mathcal{R}$ by $\delta\left(  \mathcal{U}%
_{1}\right)  \times\mathcal{U}_{2}.$

\begin{lemma}
The strict-singular control $\left(  u,\xi\right)  $ minimizes $J$ over
$\mathcal{U}$ if and only if the relaxed control $\left(  \mu,\xi\right)
=\left(  \delta_{u},\xi\right)  $ minimizes $\mathcal{J}$ over $\delta\left(
\mathcal{U}_{1}\right)  \times\mathcal{U}_{2}$.
\end{lemma}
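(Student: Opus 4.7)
The plan is to observe that the previous lemma, Lemma 16, already did almost all of the work; the only thing to notice is that restricting the relaxed problem from $\mathcal{R}$ to its sub-class $\delta(\mathcal{U}_1)\times\mathcal{U}_2$ turns the relaxed problem into a literal copy of the strict problem. So the proof will be essentially a bookkeeping argument: exhibit the natural identification between $\mathcal{U}$ and $\delta(\mathcal{U}_1)\times\mathcal{U}_2$, and then transport each statement across it.

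First, I would record the bijection $\iota:\mathcal{U}\to\delta(\mathcal{U}_1)\times\mathcal{U}_2$ defined by $\iota(v,\eta)=(\delta_v,\eta)$; surjectivity is the defining property of the set $\delta(\mathcal{U}_1)\times\mathcal{U}_2$, and injectivity follows because $\delta_v=\delta_{v'}$ pointwise forces $v=v'$. Then, invoking Remark 4 (which states that if $q_t=\delta_{v_t}$ then $x^{(q,\eta)}=x^{(v,\eta)}$ and $\mathcal{J}(q,\eta)=J(v,\eta)$), I have
\[
\mathcal{J}(\iota(v,\eta))=J(v,\eta),\qquad \forall(v,\eta)\in\mathcal{U}.
\]
Consequently
\[
\inf_{(q,\eta)\in\delta(\mathcal{U}_1)\times\mathcal{U}_2}\mathcal{J}(q,\eta)=\inf_{(v,\eta)\in\mathcal{U}}J(v,\eta),
\]
and the infima are realized by corresponding controls under $\iota$.

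For the forward direction, assume $(u,\xi)$ minimizes $J$ on $\mathcal{U}$. For every $(q,\eta)\in\delta(\mathcal{U}_1)\times\mathcal{U}_2$ we can write $(q,\eta)=(\delta_v,\eta)$ for a unique $v\in\mathcal{U}_1$, so by Remark 4 and optimality of $(u,\xi)$,
\[
\mathcal{J}(q,\eta)=J(v,\eta)\ge J(u,\xi)=\mathcal{J}(\delta_u,\xi)=\mathcal{J}(\mu,\xi).
\]
For the converse, assume $(\mu,\xi)=(\delta_u,\xi)$ minimizes $\mathcal{J}$ over $\delta(\mathcal{U}_1)\times\mathcal{U}_2$. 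For any $(v,\eta)\in\mathcal{U}$, the pair $(\delta_v,\eta)$ lies in $\delta(\mathcal{U}_1)\times\mathcal{U}_2$, so again by Remark 4
\[
J(v,\eta)=\mathcal{J}(\delta_v,\eta)\ge\mathcal{J}(\mu,\xi)=J(u,\xi),
\]
giving optimality of $(u,\xi)$ in $\mathcal{U}$.

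There is no genuine obstacle here; unlike Lemma 16, which required the Chattering Lemma to pass between $\mathcal{R}$ and $\mathcal{U}$, the present statement does not involve the enlargement at all, only the tautological identification of Dirac-valued processes with strict ones. The only care needed is to cite Remark 4 correctly so that both the state trajectory and the running cost transfer across $\iota$.
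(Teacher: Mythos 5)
Your proof is correct and follows essentially the same route as the paper: both arguments rest on the identification $(v,\eta)\leftrightarrow(\delta_v,\eta)$ together with the equalities $x^{(\delta_v,\eta)}=x^{(v,\eta)}$ and $\mathcal{J}(\delta_v,\eta)=J(v,\eta)$ (the paper's relations $(42)$, your Remark~4), and then transport the two optimality inequalities across this correspondence in each direction. Your observation that, in contrast to Lemma~16, no Chattering Lemma or value-function equality is needed here is accurate and matches the paper.
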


\begin{proof}
Let $\left(  \mu,\xi\right)  =\left(  \delta_{u},\xi\right)  $ be an optimal
relaxed-singular control minimizing the cost $\mathcal{J}$ over $\delta\left(
\mathcal{U}_{1}\right)  \times\mathcal{U}_{2}$, we have then%
\begin{equation}
\mathcal{J}\left(  \mu,\xi\right)  \leq\mathcal{J}\left(  q,\eta\right)
\text{,\ \ }\forall\left(  q,\eta\right)  \in\delta\left(  \mathcal{U}%
_{1}\right)  \times\mathcal{U}_{2}.
\end{equation}

Since $q\in\delta\left(  \mathcal{U}_{1}\right)  $, then there exists
$v\in\mathcal{U}_{1}$ such that $q=\delta_{v}.$ Hence, $\left(  \delta
_{v},\eta\right)  =\left(  q,\eta\right)  $, and since $\left(  \mu
,\xi\right)  =\left(  \delta_{u},\xi\right)  $, it is easy to see that%
\begin{equation}
\left\{
\begin{array}
[c]{c}%
x^{\left(  \mu,\xi\right)  }=x^{\left(  u,\xi\right)  },\\
x^{\left(  q,\eta\right)  }=x^{\left(  v,\eta\right)  },\\
\mathcal{J}\left(  \mu,\xi\right)  =J\left(  u,\xi\right)  ,\\
\mathcal{J}\left(  q,\eta\right)  =J\left(  v,\eta\right)  .
\end{array}
\right.
\end{equation}

By $\left(  41\right)  $ and $\left(  42\right)  $, we get then%
\[
J\left(  u,\xi\right)  \leq J\left(  v,\eta\right)  ,\ \ \forall\left(
v,\eta\right)  \in\mathcal{U}\text{.}%
\]

Conversely, let $\left(  u,\xi\right)  $ be a strict-singular control
minimizing the cost $J$ over $\mathcal{U}$. Then%
\[
J\left(  u,\xi\right)  \leq J\left(  v,\eta\right)  ,\ \ \forall\left(
v,\eta\right)  \in\mathcal{U}\text{.}%
\]

Since the controls $u,v$ $\in\mathcal{U}_{1}$, then there exist $\mu
,q\in\delta\left(  \mathcal{U}_{1}\right)  $ such that $\mu=\delta_{u}$ and
$q=\delta_{v}$. Then%
\begin{align*}
\left(  \mu,\xi\right)   &  =\left(  \delta_{u},\xi\right)  ,\\
\left(  q,\eta\right)   &  =\left(  \delta_{v},\eta\right)  .
\end{align*}

This implies that relations $\left(  42\right)  $ hold. Consequently, we get%
\[
\mathcal{J}\left(  \mu,\xi\right)  \leq\mathcal{J}\left(  q,\eta\right)
\text{,\ \ }\forall\left(  q,\eta\right)  \in\delta\left(  \mathcal{U}%
_{1}\right)  \times\mathcal{U}_{2}.
\]

The proof is completed.
\end{proof}

\subsection{Necessary optimality conditions for strict-singular controls}

\ 

Define the Hamiltonian in the strict case from $\left[  0,T\right]
\times\mathbb{R}^{n}\times U_{1}\times\mathbb{R}^{n}\times\mathcal{M}_{n\times
d}\left(  \mathbb{R}\right)  $ into $\mathbb{R}$ by%
\[
H\left(  t,x,v,p,P\right)  =h\left(  t,x,v\right)  +b\left(  t,x,v\right)
p+\sigma\left(  t,x,v\right)  P.
\]

\begin{theorem}
(Necessary optimality conditions for strict-singular controls in global form).
Suppose that $\left(  u,\xi\right)  $\textit{\ is an optimal strict-singular
control minimizing the cost }$J$\textit{\ over }$\mathcal{U}$\textit{\ and
}$x^{\left(  u,\xi\right)  }$\textit{\ denotes the solution of }$\left(
1\right)  $\textit{\ controlled by }$\left(  u,\xi\right)  $\textit{. Then,
there exists an unique pair of adapted processes}
\[
\left(  p^{\left(  u,\xi\right)  },P^{\left(  u,\xi\right)  }\right)
\in\mathcal{L}^{2}\left(  \left[  0,T\right]  ;\mathbb{R}^{n}\right)
\times\mathcal{L}^{2}\left(  \left[  0,T\right]  ;\mathbb{R}^{n\times
d}\right)  ,
\]
\textit{solution of the backward SDE}%
\begin{equation}
\left\{
\begin{array}
[c]{l}%
dp_{t}^{\left(  u,\xi\right)  }=-H_{x}\left(  t,x_{t}^{\left(  u,\xi\right)
},u_{t},p_{t}^{\left(  u,\xi\right)  },P_{t}^{\left(  u,\xi\right)  }\right)
dt+P_{t}^{\left(  u,\xi\right)  }dW_{t},\\
p_{T}^{\left(  u,\xi\right)  }=g_{x}(x_{T}^{\left(  u,\xi\right)  }),
\end{array}
\right.
\end{equation}
such that%
\begin{equation}
H\left(  t,x_{t}^{\left(  u,\xi\right)  },u_{t},p_{t}^{\left(  u,\xi\right)
},P_{t}^{\left(  u,\xi\right)  }\right)  =\underset{v_{t}\in U_{1}}{\inf
}H\left(  t,x_{t}^{\left(  u,\xi\right)  },v_{t},p_{t}^{\left(  u,\xi\right)
},P_{t}^{\left(  u,\xi\right)  }\right)  ,\ a.e,\ a.s.
\end{equation}%
\begin{equation}
\mathcal{P}\left\{  \forall t\in\left[  0,T\right]  ,\;\forall i\;;\;\left(
k_{_{i}}\left(  t\right)  +G_{i}^{\ast}\left(  t\right)  .p_{t}^{\left(
u,\xi\right)  }\right)  \geq0\right\}  =1,
\end{equation}%
\begin{equation}
\mathcal{P}\left\{  \sum_{i=1}^{d}\mathbf{1}_{\left\{  k_{_{i}}\left(
t\right)  +G_{i}^{\ast}\left(  t\right)  p_{t}^{\left(  u,\xi\right)  }%
\geq0\right\}  }d\xi_{t}^{i}=0\right\}  =1.
\end{equation}

\end{theorem}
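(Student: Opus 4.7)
The plan is to deduce this theorem as a direct specialization of Theorem 10 to Dirac-valued relaxed controls, with Lemma 15 serving as the bridge between the strict and relaxed formulations. The argument proceeds in three stages: lift $(u,\xi)$ to a relaxed optimum, invoke the already-proven relaxed theorem, and then translate every conclusion back to strict form.

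First, I would lift: since $(u,\xi)$ minimizes $J$ over $\mathcal{U}$, Lemma 15 asserts that $(\mu,\xi):=(\delta_{u},\xi)$ minimizes $\mathcal{J}$ over $\mathcal{R}$. Theorem 10 then produces a unique adapted pair $(p^{(\mu,\xi)},P^{(\mu,\xi)})$ in $\mathcal{L}^{2}([0,T];\mathbb{R}^{n})\times\mathcal{L}^{2}([0,T];\mathbb{R}^{n\times d})$ solving the backward SDE $(21)$ and satisfying $(23)$--$(25)$.

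Second, I would translate from the Dirac-relaxed setting back to the strict one. Because $\mu_{t}=\delta_{u_{t}}$, any integral $\int_{U_{1}} f(t,a)\,\mu_{t}(da)$ collapses to $f(t,u_{t})$. Applying this to $b,\sigma,h$ and to $b_{x},\sigma_{x},h_{x}$, and using $x^{(\mu,\xi)}=x^{(u,\xi)}$, the relaxed Hamiltonian reduces pointwise to the strict one, $\mathcal{H}(t,x,\delta_{v},p,P)=H(t,x,v,p,P)$. Setting $p^{(u,\xi)}:=p^{(\mu,\xi)}$ and $P^{(u,\xi)}:=P^{(\mu,\xi)}$, the adjoint equation $(21)$ collapses to $(43)$, yielding existence, uniqueness and the claimed integrability. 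Conditions $(24)$ and $(25)$ involve only $k_{t}$, $G_{t}$, $p^{(\mu,\xi)}_{t}$ and $\xi_{t}$, all invariant under the Dirac identification, so they transcribe verbatim to $(45)$ and $(46)$.

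The one step with genuine content is the reduction of the minimization in $(23)$ from $\mathbb{P}(U_{1})$ to $U_{1}$ so as to obtain $(44)$. This rests entirely on the linearity of $\mathcal{H}$ in the relaxed variable: for every $q\in\mathbb{P}(U_{1})$,
\[
\mathcal{H}(t,x,q,p,P)=\int_{U_{1}} H(t,x,a,p,P)\,q(da)\ \geq\ \inf_{v\in U_{1}} H(t,x,v,p,P),
\]
while equality is realized by any Dirac measure $\delta_{v}$. Hence $\inf_{q\in\mathbb{P}(U_{1})}\mathcal{H}(t,x,q,p,P)=\inf_{v\in U_{1}} H(t,x,v,p,P)$, and combining this with $(23)$ evaluated at $\mu_{t}=\delta_{u_{t}}$ yields $(44)$ for $dt\otimes d\mathcal{P}$-almost every $(t,\omega)$. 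I expect no serious obstacle: the entire argument is a specialization and translation, with the only mathematical observation---the collapse $\inf_{\mathbb{P}(U_{1})}\mathcal{H}=\inf_{U_{1}} H$---being an immediate consequence of affinity in $q$, so that neither a measurable selection result nor compactness of $U_{1}$ is needed for this step.
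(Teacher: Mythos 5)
Your proposal is correct and follows essentially the same route as the paper: lift the strict optimum to the Dirac relaxed control $(\delta_u,\xi)$ via the equivalence lemma, invoke the relaxed necessary conditions of Theorem 10, and translate $(21)$, $(23)$--$(25)$ back through the identifications $x^{(\delta_u,\xi)}=x^{(u,\xi)}$ and $\mathcal{H}(t,x,\delta_v,p,P)=H(t,x,v,p,P)$. Your explicit observation that $\inf_{q\in\mathbb{P}(U_1)}\mathcal{H}=\inf_{v\in U_1}H$ by affinity of $\mathcal{H}$ in $q$ is a slightly cleaner justification of the step the paper carries out by simply restricting the infimum in $(23)$ to $\delta(U_1)$, but it is the same argument in substance.
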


\begin{proof}
The optimal strict-singular control $\left(  u,\xi\right)  $ is an element of
$\mathcal{U}$, then there exists $\left(  \mu,\xi\right)  \in\delta\left(
\mathcal{U}_{1}\right)  \times\mathcal{U}_{2}$ such that
\[
\left(  \mu,\xi\right)  =\left(  \delta_{u},\xi\right)  .
\]

Since $\left(  u,\xi\right)  $ minimizes the cost $J$ over $\mathcal{U}$, then
by lemma $16$, $\left(  \mu,\xi\right)  $ minimizes $\mathcal{J}$ over
$\mathcal{R}$. Hence, by the necessary optimality conditions for
relaxed-singular controls (Theorem $10$), there exists an unique pair of
adapted processes $\left(  p^{\left(  \mu,\xi\right)  },P^{\left(  \mu
,\xi\right)  }\right)  $, solution of $\left(  21\right)  $, such that
relations $\left(  23\right)  ,\ \left(  24\right)  $ and $\left(  25\right)
$ hold.

Since $\delta\left(  U_{1}\right)  \subset\mathbb{P}\left(  U_{1}\right)  $,
then \ by $\left(  23\right)  $, we get%
\begin{equation}
\mathcal{H}\left(  t,x_{t}^{\left(  \mu,\xi\right)  },\mu_{t},p_{t}^{\left(
\mu,\xi\right)  },P_{t}^{\left(  \mu,\xi\right)  }\right)  \leq\mathcal{H}%
\left(  t,x_{t}^{\left(  \mu,\xi\right)  },q_{t},p_{t}^{\left(  \mu
,\xi\right)  },P_{t}^{\left(  \mu,\xi\right)  }\right)  ,\ \forall q_{t}%
\in\delta\left(  U_{1}\right)  ,\ a.e,\ a.s.
\end{equation}

Since $q\in\delta\left(  \mathcal{U}_{1}\right)  $, then there exists
$v\in\mathcal{U}_{1}$ such that $q=\delta_{v}$.

We note that $v$ is an arbitrary element of $\mathcal{U}_{1}$ since $q$ is arbitrary.

Now, since $\left(  \mu,\xi\right)  =\left(  \delta_{u},\xi\right)  $ and
$\left(  q,\xi\right)  =\left(  \delta_{v},\xi\right)  $, we can easily see
that%
\begin{equation}
\left\{
\begin{array}
[c]{c}%
x^{\left(  \mu,\xi\right)  }=x^{\left(  u,\xi\right)  },\\
x^{\left(  q,\xi\right)  }=x^{\left(  v,\xi\right)  },\\
\left(  p^{\left(  \mu,\xi\right)  },P^{\left(  \mu,\xi\right)  }\right)
=\left(  p^{\left(  u,\xi\right)  },P^{\left(  u,\xi\right)  }\right)  ,\\
\mathcal{H}\left(  t,x_{t}^{\left(  \mu,\xi\right)  },\mu_{t},p_{t}^{\left(
\mu,\xi\right)  },P_{t}^{\left(  \mu,\xi\right)  }\right)  =H\left(
t,x_{t}^{\left(  u,\xi\right)  },u_{t},p_{t}^{\left(  u,\xi\right)  }%
,P_{t}^{\left(  u,\xi\right)  }\right)  ,\\
\mathcal{H}\left(  t,x_{t}^{\left(  \mu,\xi\right)  },q_{t},p_{t}^{\left(
\mu,\xi\right)  },P_{t}^{\left(  \mu,\xi\right)  }\right)  =H\left(
t,x_{t}^{\left(  u,\xi\right)  },v_{t},p_{t}^{\left(  u,\xi\right)  }%
,P_{t}^{\left(  u,\xi\right)  }\right)  ,
\end{array}
\right.
\end{equation}
where $\left(  p^{\left(  u,\xi\right)  },P^{\left(  u,\xi\right)  }\right)  $
is the unique solution of $\left(  43\right)  $.

By using $\left(  47\right)  $ and $\left(  48\right)  $, we deduce $\left(
44\right)  $. Relations $\left(  45\right)  $ and $\left(  46\right)  $
follows immediately from $\left(  24\right)  ,\left(  25\right)  $ and
$\left(  48\right)  .$ The proof is completed.
\end{proof}

\begin{remark}
Bahlali and Mezerdi $\left[  2\right]  $, established necessary optimality
conditions for strict-singular controls of the second-order with two adjoint
processes. The result of the above theorem improves that of $\left[  2\right]
$, in the sense where, we consider the same strict-singular control problem,
with nonconvex control domain and a general state equation in which the
control variable enters both the drift and the diffusion coefficients, and we
establish necessary optimality conditions of the first-order with only one
adjoint process.
\end{remark}

\subsection{Sufficient optimality conditions for strict-singular controls}

\ 

\begin{theorem}
(Sufficient optimality conditions for strict-singular controls). Assume that
the functions $g$\ and $x\longmapsto H\left(  t,x,q,p,P\right)  $\ are convex.
Then, $\left(  u,\xi\right)  $\ is an optimal solution of problem $\left\{
\left(  1\right)  ,\left(  2\right)  ,\left(  3\right)  \right\}  $ if it
satisfies $\left(  44\right)  ,\ \left(  45\right)  $ and $\left(  46\right)
.$
\end{theorem}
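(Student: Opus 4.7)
The plan is to reduce the strict-singular sufficient condition to the relaxed-singular sufficient condition (Theorem 11) via the embedding $u \mapsto \delta_u$ and Lemma 17 (or Lemma 16). Given a candidate strict-singular control $(u,\xi)$ satisfying $(44),(45),(46)$, I would set $(\mu,\xi) = (\delta_u,\xi) \in \mathcal{R}$ and verify the three relaxed-singular sufficient conditions $(23),(24),(25)$ for this $(\mu,\xi)$. Once that is done, Theorem 11 gives that $(\mu,\xi)$ is optimal over $\mathcal{R}$, and then Lemma 16 transfers the optimality back to $(u,\xi)$ over $\mathcal{U}$.

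First, I would check that the convexity hypotheses transfer. Since $\mathcal{H}(t,x,q,p,P) = \int_{U_1} H(t,x,a,p,P)\,q(da)$ by definition of $\mathcal{H}$, convexity of $x \mapsto H(t,x,v,p,P)$ for every $v \in U_1$ implies, by integration in $a$ against the probability measure $q$, that $x \mapsto \mathcal{H}(t,x,q,p,P)$ is convex for every $q \in \mathbb{P}(U_1)$. Together with convexity of $g$, this is exactly the hypothesis needed in Theorem 11.

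Next, I would identify the adjoint processes. Because $(\mu,\xi) = (\delta_u,\xi)$, the coefficients of the backward SDE $(21)$ collapse to those of $(43)$, so the unique solution $(p^{(\mu,\xi)},P^{(\mu,\xi)})$ of $(21)$ coincides with the unique solution $(p^{(u,\xi)},P^{(u,\xi)})$ of $(43)$. Consequently $(45)$ and $(46)$ give $(24)$ and $(25)$ verbatim for $(\mu,\xi)$. For $(23)$, I would argue as follows: for any $q_t \in \mathbb{P}(U_1)$,
\[
\mathcal{H}(t,x_t^{(\mu,\xi)},q_t,p_t^{(\mu,\xi)},P_t^{(\mu,\xi)}) = \int_{U_1} H(t,x_t^{(u,\xi)},a,p_t^{(u,\xi)},P_t^{(u,\xi)})\,q_t(da),
\]
and by $(44)$ the integrand is pointwise bounded below by $H(t,x_t^{(u,\xi)},u_t,p_t^{(u,\xi)},P_t^{(u,\xi)})$; integrating against the probability $q_t$ preserves the inequality, and the right-hand side equals $\mathcal{H}(t,x_t^{(\mu,\xi)},\mu_t,p_t^{(\mu,\xi)},P_t^{(\mu,\xi)})$ because $\mu_t = \delta_{u_t}$. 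This gives $(23)$.

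With $(23),(24),(25)$ verified and the convexity hypotheses in force, Theorem 11 yields $\mathcal{J}(\mu,\xi) \le \mathcal{J}(q,\eta)$ for all $(q,\eta) \in \mathcal{R}$, i.e. $(\mu,\xi)$ is optimal over $\mathcal{R}$. Lemma 16 then converts this into $J(u,\xi) \le J(v,\eta)$ for all $(v,\eta) \in \mathcal{U}$, completing the proof. There is no real obstacle here: the argument is a routine transfer via the Dirac embedding, and the only mildly subtle point is the stability of the convexity hypothesis under integration in $a$, which is immediate from the definition of $\mathcal{H}$.
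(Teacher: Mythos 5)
Your proof is correct, and it takes a slightly different route from the paper's. The paper never leaves the Dirac world: from $(44)$--$(46)$ it deduces the relaxed relations $(49)$--$(51)$ with the infimum taken only over $\delta(U_{1})$, then re-runs the argument of Theorem 11 restricted to $\delta(\mathcal{U}_{1})\times\mathcal{U}_{2}$ to conclude that $(\delta_{u},\xi)$ minimizes $\mathcal{J}$ over that subset, and finally invokes Lemma 17 to descend to $\mathcal{U}$. You instead upgrade $(44)$ to the full relaxed condition $(23)$ over all of $\mathbb{P}(U_{1})$ by writing $\mathcal{H}(t,x,q,p,P)=\int_{U_{1}}H(t,x,a,p,P)\,q(da)$ and integrating the pointwise inequality $H(t,x_{t},a,\cdot)\geq H(t,x_{t},u_{t},\cdot)$ against the probability $q_{t}$; this is the one genuinely new observation in your write-up (the affine dependence of $\mathcal{H}$ on $q$ means the infimum over Dirac measures already equals the infimum over all of $\mathbb{P}(U_{1})$). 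It lets you apply Theorem 11 verbatim as a black box, obtaining the stronger intermediate conclusion that $(\delta_{u},\xi)$ is optimal over all of $\mathcal{R}$, and then return to $\mathcal{U}$ via Lemma 16 --- where, note, you only need the trivial direction (restricting the infimum from $\mathcal{R}$ to the Dirac subset and using $\mathcal{J}(\delta_{v},\eta)=J(v,\eta)$), so none of the chattering/approximation machinery behind $(34)$ is actually required. The identification of the adjoint pairs via uniqueness of the BSDE, and the transfer of convexity from $H$ to $\mathcal{H}$ by integration in $a$, match relation $(48)$ and the corresponding remark in the paper's proof. Both arguments are sound; yours is marginally cleaner in that it avoids the ``by the same proof as Theorem 11'' step, at the cost of routing through the full relaxed class.
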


\begin{proof}
Let $\left(  u,\xi\right)  \in\mathcal{U}$\ be a strict-singular control
(candidate to be optimal) and $\left(  v,\eta\right)  $ an arbitrary element
of $\mathcal{U}$.

The controls $u,v$\ are elements of $\mathcal{U}_{1}$, then there exist
$\mu,q\in\delta\left(  \mathcal{U}_{1}\right)  $\ such that $\mu=\delta_{u}$
and $q=\delta_{v}$. Hence,
\begin{align*}
\left(  \mu,\xi\right)   &  =\left(  \delta_{u},\xi\right)  ,\\
\left(  q,\eta\right)   &  =\left(  \delta_{v},\eta\right)  .
\end{align*}

This implies that relations $\left(  48\right)  $ hold. Then, by $\left(
44\right)  ,\ \left(  45\right)  $ and $\left(  46\right)  $, we deduce
respectively the relaxed relations
\begin{equation}
\mathcal{H}\left(  t,x_{t}^{\left(  \mu,\xi\right)  },\mu_{t},p_{t}^{\left(
\mu,\xi\right)  },P_{t}^{\left(  \mu,\xi\right)  }\right)  =\underset{q_{t}%
\in\mathbb{\delta}\left(  U_{1}\right)  }{\inf}\mathcal{H}\left(
t,x_{t}^{\left(  \mu,\xi\right)  },q_{t},p_{t}^{\left(  \mu,\xi\right)
},P_{t}^{\left(  \mu,\xi\right)  }\right)  ,\ a.e,\ a.s,
\end{equation}%
\begin{equation}
\mathcal{P}\left\{  \forall t\in\left[  0,T\right]  ,\;\forall i\;;\;\left(
k_{_{i}}\left(  t\right)  +G_{i}^{\ast}\left(  t\right)  .p_{t}^{\left(
\mu,\xi\right)  }\right)  \geq0\right\}  =1,
\end{equation}%
\begin{equation}
\mathcal{P}\left\{  \sum_{i=1}^{d}\mathbf{1}_{\left\{  k_{_{i}}\left(
t\right)  +G_{i}^{\ast}\left(  t\right)  p_{t}^{\left(  \mu,\xi\right)  }%
\geq0\right\}  }d\xi_{t}^{i}=0\right\}  =1.
\end{equation}

We remark that the infimum in $\left(  49\right)  $ is taken over
$\mathbb{\delta}\left(  U_{1}\right)  .$

Now, since $H$\ is convex in $x$, it is easy to see that $\mathcal{H}$\ is
convex in $x$, and since $g$\ is convex, then by using $\left(  49\right)
,\ \left(  50\right)  $ and $\left(  51\right)  $, and by the same proof that
in theorem $11$, we show that $\left(  \mu,\xi\right)  $ minimizes the cost
$\mathcal{J}$ over $\delta\left(  \mathcal{U}_{1}\right)  \times
\mathcal{U}_{2}$. Then, by Lemma $17$, we deduce that $\left(  u,\xi\right)  $
minimizes the cost $J$\ over $\mathcal{U}$. The theorem is proved.
\end{proof}

\begin{remark}
The sufficient optimality conditions for strict-singular controls are proved
without assuming neither the convexity of $U_{1}$ nor that of the Hamiltonian
$H$ in $v$.
\end{remark}

\end{document}